\newtheorem{theorem}{Theorem}
\newtheorem{lemma}{Lemma}
\newtheorem{cor}{Corollary}
\newtheorem{con}{Conjecture}
\newtheorem{dfn}{Definition}
\newtheorem{rem}{Remark}
\numberwithin{equation}{section}
\newcommand{\abs}[1]{\left\vert#1\right\vert}
\newcommand{\N}{\mbox{$\mathbb{N}$}}
\newcommand{\C}{\mbox{$\mathbb{C}$}}
\newcommand{\D}{\mbox{$\mathbb{D}$}}
\date{\today}
\begin{document}
\setcounter{page}{1}

\title[Quasiconformal close-to-convex harmonic mappings]
{On quasiconformal close-to-convex harmonic\\ \vskip.05in mappings involving starlike functions}

\author[Zhi-Gang Wang, Xin-Zhong Huang, Zhi-Hong Liu and Rahim Kargar]{Zhi-Gang Wang, Xin-Zhong Huang, Zhi-Hong Liu and Rahim Kargar}

\vskip.10in
\address{\noindent Zhi-Gang Wang\vskip.05in
 School of Mathematics and Computing Science, Hunan
First Normal University, Changsha 410205, Hunan, P. R. China.}
\vskip.05in
\email{\textcolor[rgb]{0.00,0.00,0.84}{wangmath$@$163.com}}

\address{\noindent Xin-Zhong Huang\vskip.05in
School of Mathematical Sciences, Huaqiao University, Quanzhou 362021, Fujian,
P. R. China.}
\email{\textcolor[rgb]{0.00,0.00,0.84}{huangxz$@$hqu.edu.cn}}

\address{\noindent Zhi-Hong Liu\vskip.05in
College of Science, Guilin University of Technology, Guilin 541004, Guangxi, P. R. China.}
\email{\textcolor[rgb]{0.00,0.00,0.84}{liuzhihongmath$@$163.com}}

\address{\noindent Rahim Kargar\vskip.05in
Department of Mathematics and Statistics, University of Turku, Turku, Finland.}
\email{\textcolor[rgb]{0.00,0.00,0.84}{rakarg$@$utu.fi}}

%\begin{center}
%\large{\textcolor[rgb]{0.00,0.00,0.84}{This paper will be published in \textit{Acta Math. Sinica}\ (\textit{Chin. Ser.}) in 2020.}}
%\end{center}

\subjclass[2010]{Primary 58E20; Secondary 30C55.}

\keywords{Analytic function; univalent function; starlike function;
close-to-convex harmonic mapping; quasiconformal harmonic mapping.}

\begin{abstract}
In the present paper, we discuss several basic properties of a class of quasiconformal close-to-convex harmonic mappings with starlike analytic part, such results as coefficient inequalities, an integral representation, a growth theorem, an area theorem, and radii of close-to-convexity of partial sums of the class, are derived.
\end{abstract}

\vskip.20in

\maketitle

\tableofcontents
   
\section{Introduction}

A planar harmonic mapping $f$ in the open unit disk $\D$ can be represented as $f=h+\overline{g}$, where $h$ and $g$ are analytic
functions in $\D$. We call $h$ and $g$ the analytic part and
co-analytic part of $f$, respectively. Since the Jacobian of $f$ is
given by $\abs{h'}^2-\abs{g'}^2$, by Lewy's theorem (see
\cite{Lewy}), it is locally univalent and sense-preserving if and
only if $|g^{\prime}|<|h^{\prime}|$, or equivalently, if
$h^{\prime}(z)\neq 0$ and the dilatation
$\omega={g^{\prime}}/{h^{\prime}}$ has the property $|\omega|<1$ in
$\D$. Let $\mathcal{H}$ denote the class of harmonic functions
$f=h+\overline{g}$ normalized by the conditions $f(0)=f_{z}(0)-1=0$, which have the
form
\begin{equation}\label{111}f (z)=z+\sum_{k=2}^{\infty}a_kz^k+\overline{\sum_{k=1}^{\infty}b_kz^k} \quad (z\in\D).\end{equation}
Denote by $\mathcal{S_{\mathcal{H}}}$ the class of harmonic functions $f\in\mathcal{H}$
that are univalent and sense-preserving in $\D$. Also denote by $\mathcal{S}^{0}_{\mathcal{H}}$ the subclass of $\mathcal{S_{\mathcal{H}}}$ with the additional condition $f_{\overline{z}}(0)=0$. We observe that Clunie and Sheil-Small \cite{cs} have proved several fundamental characteristics for the class $\mathcal{S_{\mathcal{H}}}$, but other basic problems such as Riemann mapping theorem for planar harmonic mappings, harmonic analogue of Bieberbach conjecture, sharp coefficient inequalities and radius of covering theorem for the class $\mathcal{S}^{0}_{\mathcal{H}}$ are still \textit{open} (see \cite{d}). The classical family $\mathcal{S}$ of analytic univalent and
normalized functions in $\D$ is a subclass of
$\mathcal{S}_\mathcal{H}^0$ with $g(z)\equiv 0$.

If a univalent harmonic mapping $f=h+\overline{g}$ satisfies the condition
\begin{equation*}\label{13}
\abs{\omega(z)}=\abs{\frac{g^{\prime}(z)}{h^{\prime}(z)}}\leq k\quad (0\leq k<1;\, z\in\D),
\end{equation*}
then $f$ is said to be a $K$-quasiconformal harmonic mapping, where $$K=\frac{1+k}{1-k}\quad (0\leq k<1).$$

A domain $\Omega$ is said to be close-to-convex if
$\C\backslash\Omega$ can be represented as a union of non-crossing
half-lines. Following the result due to Kaplan (see \cite{k}), an
analytic function $f$ is called close-to-convex if there exits a
univalent convex function $\phi$ defined in $\D$ such that
$${\rm Re}\left(\frac{f'(z)}{\phi'(z)}\right)>0\quad (z\in\D).$$ Furthermore, a planar harmonic
mapping $f:\D\rightarrow\C$ is close-to-convex if it is injective
and $f(\D)$ is a close-to-convex domain. We denote by $\mathcal{C}_\mathcal{H}^0$ the class of close-to-convex harmonic mappings.

The theory and applications of planar harmonic mappings are
presented in the recent monograph by Duren \cite{d}. Furthermore,
% Aleman and Mart\'{\i}n \cite{am},
Bshouty
{\it et al.} \cite{bjj,bl,bls}, Chen \textit{et al.} \cite{cprw}, Chuaqui and
Hern\'{a}ndez \cite{ch}, Kalaj \cite{k1}, Mocanu \cite{m4,m5}, Nagpal and Ravichandran \cite{nr,nr1}, Partyka {\it et al.} \cite{psz},
Ponnusamy and Sairam Kaliraj \cite{pk,ps2,ps1},
Sun {\it et al.} \cite{sjr,srj}, Wang {\it et al.} \cite{wll,wsj}
%Zhu and Huang \cite{zh},
derived several criteria for univalency, or quasiconformality, involving planar harmonic mappings.

Let $\mathcal{A}$ denote the class of functions $h$ of the form $$h(z)=z+\sum_{k=2}^{\infty}a_kz^k,$$
which are analytic in $\D$. Also let $\mathcal{G}(\alpha)$ be the subclass of $\mathcal{A}$ whose members satisfy the inequality
\begin{equation}\label{101}{\rm Re}\left(1+\frac{zh''(z)}{h'(z)}\right)<\alpha\quad\left(\alpha>1;\, z\in\D\right).\end{equation}
For convenience, we write $\mathcal{G}(3/2)=:\mathcal{G}$. The class $\mathcal{G}$ plays an important role in the analytic function theory.

We observe that the function class $\mathcal{G}(\alpha)$ was studied extensively by Kargar \textit{et al.} \cite{kpe}, Kanas \textit{et al.} \cite{kmp}, Maharana \textit{et al.} \cite{mps}, Obradovi\'{c} \textit{et al.} \cite{opw}, Ponnusamy and Sahoo \cite{ps} and Ponnusamy \textit{et al.} \cite{psw} for differential purposes. It is known that the functions in $\mathcal{G}(\alpha)$ are starlike in $\D$ for $\alpha\in(1,3/2]$ (see Ponnusamy and Rajasekaran \cite{pr}, Singh and Singh \cite{ss}), whereas not univalent in $\D$ for $\alpha\in(3/2,+\infty)$ (see \cite{opw}).

Recently, Mocanu \cite{m5} posed the following conjecture.

\begin{con}\label{c111} Let $$\mathcal{M}=\left\{f=h+\overline{g}\in \mathcal{H}:\ g'=zh'\ {\it and}\
{\rm Re}\left(1+\frac{zh''(z)}{h'(z)}\right)>-\frac{1}{2}\quad(z\in\D)\right\}.$$ Then $\mathcal{M}\subset\mathcal{S}_\mathcal{H}^0$.
\end{con}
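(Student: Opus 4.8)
The plan is to establish univalence through the shearing technique of Clunie and Sheil-Small \cite{cs}, exploiting that the hypothesis $g'=zh'$ forces the dilatation to be $\omega=g'/h'=z$. First I would record the easy structural consequences: since $\abs{\omega(z)}=\abs{z}<1$ on $\D$, Lewy's theorem shows $f$ is locally univalent and sense-preserving, and since $g'(0)=0\cdot h'(0)=0$ we have $b_1=f_{\overline z}(0)=0$, so that $f\in\mathcal H$ with the normalization defining $\mathcal S_{\mathcal H}^0$. Everything therefore reduces to proving that $f$ is globally injective, and for this it suffices to show that $f$ is a close-to-convex harmonic mapping. I note in passing that when $h$ is \emph{convex} the conclusion is already classical (convex $h$ together with $\abs{\omega}<1$ forces $f$ close-to-convex), so the real content of the conjecture is to relax convexity all the way down to the order $-\tfrac12$ recorded in the hypothesis.

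To this end I would invoke the Clunie--Sheil-Small criterion \cite{cs}: a sense-preserving harmonic $f=h+\overline g$ is close-to-convex, and hence univalent, provided the analytic functions $\phi_\varepsilon:=h+\varepsilon g$ are close-to-convex for every $\varepsilon$ with $\abs{\varepsilon}=1$. Differentiating gives $\phi_\varepsilon'(z)=h'(z)(1+\varepsilon z)$, which is zero-free on $\D$, and a direct computation yields
\[
1+\frac{z\phi_\varepsilon''(z)}{\phi_\varepsilon'(z)}=\left(1+\frac{zh''(z)}{h'(z)}\right)+\frac{\varepsilon z}{1+\varepsilon z}.
\]
The main step is then to verify Kaplan's criterion for each $\phi_\varepsilon$: for all $0\le r<1$ and all $\theta_1<\theta_2$ with $\theta_2-\theta_1\le 2\pi$,
\[
\int_{\theta_1}^{\theta_2}{\rm Re}\left(1+\frac{z\phi_\varepsilon''(z)}{\phi_\varepsilon'(z)}\right)d\theta>-\pi\qquad(z=re^{i\theta}).
\]
For the first summand the hypothesis ${\rm Re}(1+zh''/h')>-\tfrac12$ gives a contribution exceeding $-\tfrac12(\theta_2-\theta_1)$, while the second summand integrates to the net increment of $\arg(1+\varepsilon z)$ along the arc, which stays within $(-\tfrac\pi2,\tfrac\pi2)$ because $1+\varepsilon z$ lies in the right half-plane.

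The delicate point, and what I expect to be the real obstacle, is that these two estimates cannot simply be added: the crude bounds together give only $-2\pi$, whereas $-\pi$ is what is required. One must exploit the trade-off that when the arc length $\theta_2-\theta_1$ approaches $2\pi$ (pushing the $h$-contribution toward $-\pi$) the curve $1+\varepsilon z$ nearly closes up, so that $\arg(1+\varepsilon z)\big|_{\theta_1}^{\theta_2}$ tends to $0$ rather than to a negative value. Quantifying this interaction, that is, proving $\arg(1+\varepsilon z)\big|_{\theta_1}^{\theta_2}\ge \tfrac12(\theta_2-\theta_1)-\pi$ for every admissible arc with the strict inequality surviving, is the crux; the borderline configurations (as $r\to1$ with the arc straddling $\varepsilon z\approx-1$) make the two contributions each approach $-\tfrac\pi2$ and $-\tfrac\pi2$, which shows that the constant $-\tfrac12$ in the hypothesis is exactly calibrated and that any proof must treat the joint estimate rather than the separate ones. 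Once this sharpened Kaplan estimate is secured, close-to-convexity of every $\phi_\varepsilon$ follows, and the shearing criterion yields that $f=h+\overline g$ is univalent and close-to-convex, whence $f\in\mathcal S_{\mathcal H}^0$, completing the argument.
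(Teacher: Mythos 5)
Your route is exactly the one the paper points to: the paper itself gives no proof of Conjecture~\ref{c111} but records that Bshouty and Lyzzaik \cite{bl} settled it (in the stronger form of Theorem~A) by combining Kaplan's close-to-convexity criterion \cite{k} with the Clunie--Sheil-Small shearing criterion \cite{cs}, which is precisely your plan: note $\omega=z$, reduce to close-to-convexity of $\phi_\varepsilon=h+\varepsilon g$ for all $|\varepsilon|=1$, compute $1+z\phi_\varepsilon''/\phi_\varepsilon'=\left(1+zh''/h'\right)+\varepsilon z/(1+\varepsilon z)$, and verify Kaplan's integral condition. Your structural preliminaries are correct (one small imprecision: since the values of $\arg(1+\varepsilon z)$ lie in $(-\pi/2,\pi/2)$, the net \emph{increment} along an arc a priori lies in $(-\pi,\pi)$, not $(-\pi/2,\pi/2)$).

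However, the proposal stops short at exactly the decisive point: you correctly observe that the separate estimates only give $>-2\pi$, you write down the joint inequality that is needed, namely $\arg(1+\varepsilon z)\big|_{\theta_1}^{\theta_2}\ge\tfrac12(\theta_2-\theta_1)-\pi$, and then you declare it "the crux" to be "secured" without proving it. That is a genuine gap --- the whole conjecture lives in that estimate. It can be closed by a short computation you should supply: for fixed $0<r<1$ write $\varepsilon z=re^{i\psi}$ and $A(\psi)=\arg\left(1+re^{i\psi}\right)\in(-\pi/2,\pi/2)$; then
\[
\frac{d}{d\psi}\left(A(\psi)-\frac{\psi}{2}\right)
=\mathrm{Re}\,\frac{re^{i\psi}}{1+re^{i\psi}}-\frac{1}{2}
=\frac{r^{2}-1}{2\,|1+re^{i\psi}|^{2}}<0,
\]
so $A(\psi)-\psi/2$ is strictly decreasing; since $A$ is $2\pi$-periodic (the curve $1+re^{i\psi}$ stays in the right half-plane and does not wind), this function drops by exactly $\pi$ over any interval of length $2\pi$, whence $A(\psi_2)-A(\psi_1)\ge\tfrac12(\psi_2-\psi_1)-\pi$ for $0<\psi_2-\psi_1\le 2\pi$, with equality only when $\psi_2-\psi_1=2\pi$. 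Adding the strict pointwise bound $\mathrm{Re}\left(1+zh''/h'\right)>-\tfrac12$, whose integral over the arc exceeds $-\tfrac12(\theta_2-\theta_1)$ strictly, gives Kaplan's condition $>-\pi$ in every case, including the borderline $\theta_2-\theta_1=2\pi$ where the argument term contributes exactly $-\pi+\tfrac12\cdot 2\pi=0$ and strictness comes from the $h$-term. With this lemma inserted, your argument is complete and coincides in substance with the Bshouty--Lyzzaik proof the paper cites; note also that your heuristic about the extremal configuration needs adjusting --- equality in the argument estimate occurs for the full circle $\theta_2-\theta_1=2\pi$ at every $r$, not only as $r\to1$ near $\varepsilon z\approx-1$.
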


By making use of
the classical results of close-to-convexity (see Kaplan \cite{k}) and harmonic close-to-convexity (see Clunie and Sheil-Small \cite{cs}), Bshouty and Lyzzaik \cite{bl} have proved Conjecture \ref{c111} by established the following stronger result.
\vskip.10in
\noindent{\bf Theorem A.}\  $\mathcal{M}\subset\mathcal{C}_\mathcal{H}^0$.
\vskip.10in

For more recent general results on the convexity, starlikeness and close-to-convexity of harmonic mappings, we refer the readers to \cite{mp,bjj,gv1,gv,kpv,koh1,koh2,lh,m5,ps1,wlrs}.
Recall the following criterion for harmonic close-to-convexity due to Abu Muhanna and Ponnusamy \cite[Corollary 3]{mp}.
\vskip.10in
\noindent{\bf Theorem B.}\  \textit{Let $h$ and $g$ be normalized analytic functions in $\D$ such that $${\rm Re}\left(1+\frac{zh''(z)}{h'(z)}\right)<\frac{3}{2},$$ and $$
g'(z)=\lambda z^nh'(z)
\quad \left(0<\abs{\lambda}\leq \frac{1}{n+1};\, n\in\N:=\{1,2,3,\ldots\}\right).$$ Then the harmonic mapping $f=h+\overline{g}$ is univalent and close-to-convex in $\D$.}
\vskip.10in

Motivated essentially by Theorem B and the definition of quasiconformal harmonic mappings, we introduce and investigate the following subclass $\mathcal{F}(\alpha,\lambda,n)$ of quasiconformal close-to-convex harmonic mappings.

\begin{dfn}
{\rm A harmonic mapping $f=h+\overline{g}\in\mathcal{H}$ is said to be in the class $\mathcal{F}(\alpha,\lambda,n)$ if
$h$ and $g$ satisfy the
conditions \begin{equation}\label{113}{\rm Re}\left(1+\frac{zh''(z)}{h'(z)}\right)<\alpha \quad\left(1<\alpha\leq\frac{3}{2}\right),\end{equation} and
\begin{equation}\label{114}
g'(z)=\lambda z^nh'(z)
\quad \left(\lambda\in\C\ {\rm with}\ \abs{\lambda}\leq \frac{1}{n+1};\, n\in\N\right).\end{equation}}
\end{dfn}

For simplicity, we denote the class $\mathcal{F}(\alpha,\lambda,1)$ by $\mathcal{F}(\alpha,\lambda)$. The image of $\D$ under the mapping $$f(z)=z-\frac{1}{2}z^2+\overline{\frac{1}{4}z^2-\frac{1}{6}z^3}\in\mathcal{F}\left({3}/{2}, {1}/{2}\right)$$ is presented as Figure \ref{fig11}.

\begin{figure}[ht]
 \centering
\includegraphics[width=4.61in]{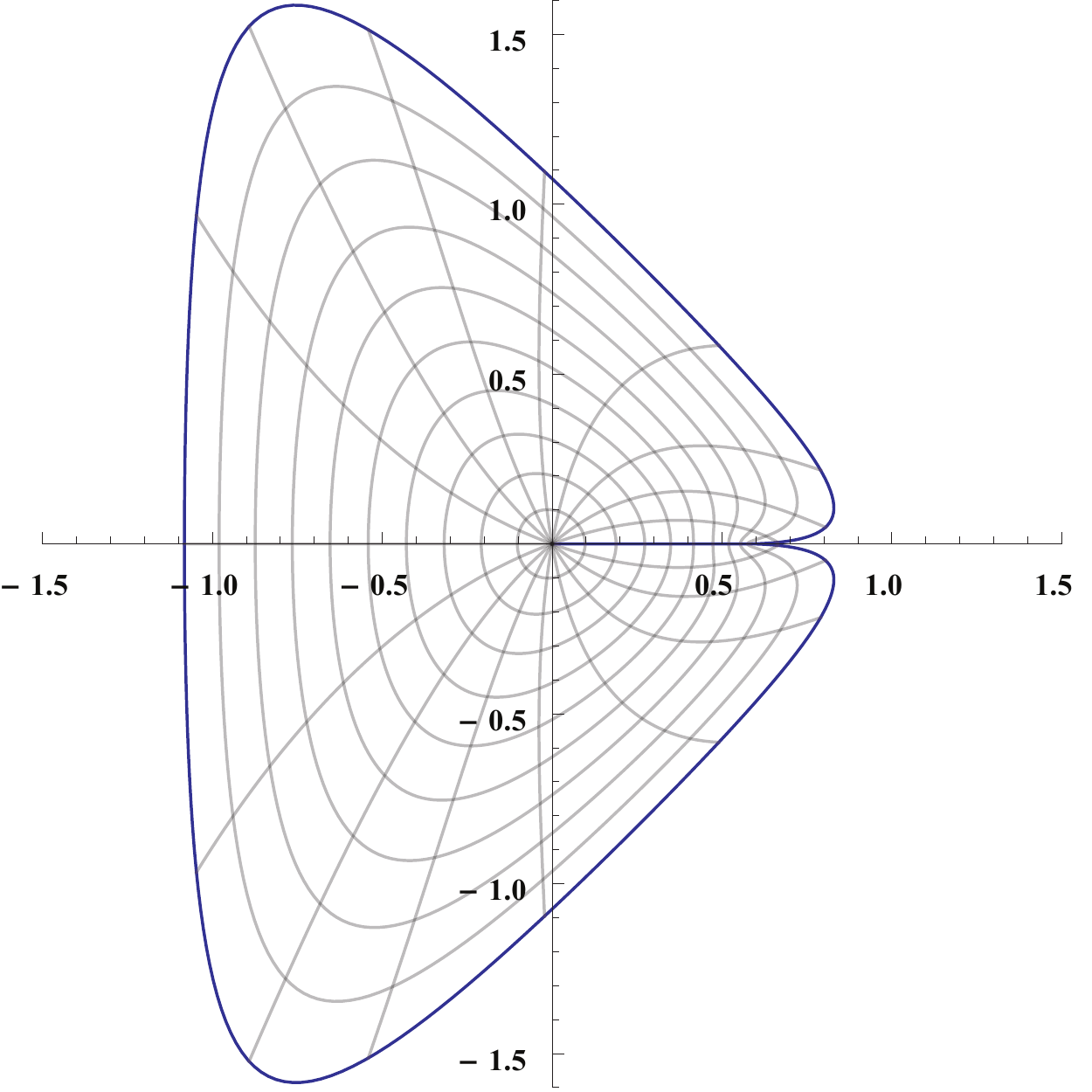}
\caption{The image of $\D$ under the mapping $f(z)=z-\frac{1}{2}z^2+\overline{\frac{1}{4}z^2-\frac{1}{6}z^3}$.}\label{fig11}
\end{figure}

This paper is organized as follows. In Section 2, we provide a counterexample to illustrate the non-univalency of the class $\mathcal{G}(\alpha)$ for $\alpha\in(3/2,2)$. In Section 3, we prove several basic properties of the class $\mathcal{F}(\alpha,\lambda,n)$ of quasiconformal close-to-convex harmonic mappings with starlike analytic part, such results as coefficient inequalities, an integral representation, a growth theorem, an area theorem, and radii of close-to-convexity of partial sums of the class, are derived.
\vskip.20in

\section{Non-univalency of the class $\mathcal{G}(\alpha)$ for $\alpha\in(3/2,+\infty)$}

Obradovi\'{c} \textit{et al.} \cite{opw} stated that the class $\mathcal{G}(\alpha)$ is not univalent in $\D$ for $\alpha\in(3/2,+\infty)$, but they did not give detailed proof about the non-univalency. We note that Kargar \textit{et al.} \cite{kpe} given a counterexample to prove the class $\mathcal{G}(\alpha)$ is not univalent in $\D$ for $\alpha\in[2,+\infty)$, in this section, we shall give a counterexample to illuminate the non-univalency of the class $\mathcal{G}(\alpha)$ for $\alpha\in(3/2,2)$.

\begin{theorem}\label{t001}
$\mathcal{G}(\alpha)\not\subset \mathcal{S}$ for $\alpha\in(3/2,+\infty)$.
\end{theorem}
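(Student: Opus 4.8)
The goal is to exhibit, for each $\alpha \in (3/2, +\infty)$, a concrete function $h \in \mathcal{G}(\alpha)$ that fails to be univalent in $\D$. Since the authors note that Kargar et al.\ already handled $\alpha \in [2, +\infty)$, the real content lies in the range $\alpha \in (3/2, 2)$, so I would focus the construction there (or build one family covering all $\alpha > 3/2$ uniformly).

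The plan is to construct $h$ directly from the defining inequality \eqref{101}. Writing $p(z) = 1 + zh''(z)/h'(z)$, the condition $h \in \mathcal{G}(\alpha)$ is $\operatorname{Re} p(z) < \alpha$. A natural way to saturate this is to let $p$ map $\D$ onto a half-plane whose boundary is $\operatorname{Re} w = \alpha$; the simplest choice is
\begin{equation*}
1+\frac{zh''(z)}{h'(z)} = 1 + (\alpha-1)\,\frac{2z}{1-z}\,,
\end{equation*}
or an analogous Möbius image of $\D$ into $\{\operatorname{Re} w < \alpha\}$ anchored so that $p(0)=1$ (forcing $h'(0)$ to match the normalization $h(z)=z+\cdots$). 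From such a prescription one integrates twice: first solve $h''/h' = (p(z)-1)/z$ to get $\log h'(z) = \int_0^z (p(t)-1)/t\,dt$, and then integrate $h'$ to recover $h$. For the half-plane choice above, $(p(t)-1)/t = 2(\alpha-1)/(1-t)$, so $\log h'(z) = -2(\alpha-1)\log(1-z)$, giving the clean closed form
\begin{equation*}
h'(z) = (1-z)^{-2(\alpha-1)}, \qquad h(z) = \int_0^z (1-t)^{-2(\alpha-1)}\,dt.
\end{equation*}
I would verify that this $h$ genuinely lies in $\mathcal{G}(\alpha)$ (the image of $p$ is exactly the half-plane $\operatorname{Re} w < \alpha$, with strict inequality on $\D$) and that it is normalized.

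The decisive step is then to prove non-univalence of this explicit $h$ for $\alpha > 3/2$. Here I would look at the boundary behaviour. When $2(\alpha-1) > 1$, i.e.\ exactly when $\alpha > 3/2$, the integral $\int_0^1 (1-t)^{-2(\alpha-1)}\,dt$ diverges, so $h$ blows up along the positive axis; but $h$ extends to the rest of the boundary and one can detect a failure of injectivity by showing two distinct boundary directions get identified, or more concretely by examining the argument of $h(re^{i\theta})$ as $r \to 1$ and checking that the boundary curve $h(\partial\D)$ has a self-intersection. The cleanest rigorous route is to track $\arg h(z)$: as $\theta$ ranges over $(0, \pi)$ the tangent/winding behaviour forces the image arc to loop back and cross itself once $2(\alpha-1)$ exceeds $1$, whereas for $2(\alpha-1) \le 1$ (i.e.\ $\alpha \le 3/2$) the map remains univalent (this is consistent with the known starlikeness in that range). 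Exhibiting an explicit pair $z_1 \ne z_2$ with $h(z_1) = h(z_2)$ — e.g.\ a conjugate pair $z = re^{\pm i\theta}$ for suitable $r, \theta$ — would make the failure unambiguous.

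The main obstacle I anticipate is the final injectivity-failure argument: producing a clean, verifiable self-intersection (or an explicit colliding pair) rather than a soft asymptotic statement. The integral defining $h$ has a closed form $h(z) = \bigl(1-(1-z)^{3-2\alpha}\bigr)/(3-2\alpha)$ when $\alpha \ne 3/2$, and I would exploit this: the map $z \mapsto (1-z)^{3-2\alpha}$ is essentially a power map of a half-plane, so univalence of $h$ reduces to whether $w \mapsto w^{3-2\alpha}$ is injective on the image of $1-\D$ (a half-plane or disk), and a power map with exponent of modulus exceeding $1$ wraps that region past angle $2\pi$, breaking injectivity precisely when $|3-2\alpha| > 1$, i.e.\ $\alpha > 2$ or $\alpha < 1$ — so for the intermediate range $\alpha \in (3/2, 2)$ a finer argument on the exact opening angle of the image sector is needed, and pinning down that the opening exceeds $\pi$ (forcing overlap) is the delicate computation. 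I would therefore compute the opening angle of $h(\D)$ at the "cusp" and show it strictly exceeds $2\pi$-admissibility exactly for $\alpha > 3/2$, which is where the boundary case $\alpha = 3/2$ separates univalent from non-univalent.
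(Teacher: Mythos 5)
Your overall plan --- saturate the defining inequality with a M\"obius map of $\D$ onto a half-plane, integrate twice to obtain a closed-form candidate, then exhibit two colliding points --- is in spirit exactly what the paper does, but your execution contains a sign error that is fatal to everything downstream. The map you chose, $p(z)=1+(\alpha-1)\frac{2z}{1-z}=\frac{1+(2\alpha-3)z}{1-z}$, sends $\D$ onto the \emph{right} half-plane $\{{\rm Re}\,w>2-\alpha\}$, not onto $\{{\rm Re}\,w<\alpha\}$: since $\frac{2z}{1-z}$ maps $\D$ onto $\{{\rm Re}\,w>-1\}$, your $p$ is unbounded above (let $z\to1^{-}$), so your $h$ with $h'(z)=(1-z)^{-2(\alpha-1)}$ does \emph{not} lie in $\mathcal{G}(\alpha)$; the verification you defer would fail. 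Worse, in the critical range $\alpha\in(3/2,2)$ your candidate satisfies ${\rm Re}\bigl(1+zh''(z)/h'(z)\bigr)>2-\alpha\geq 0$, i.e., it is \emph{convex}, hence univalent, so no ``finer argument on the opening angle'' can turn it into a counterexample there. Your wrap-around threshold $|3-2\alpha|>1$ is also wrong: $1-z$ maps $\D$ onto the disk of center $1$ and radius $1$, on which the circle $|w|=\rho$ cuts an arc of angular width $2\arccos(\rho/2)<\pi$, so the power $w\mapsto w^{3-2\alpha}$ first fails to be injective when $|3-2\alpha|>2$, i.e., your $h$ is non-univalent only for $\alpha>5/2$. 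The divergence of $\int_0^1(1-t)^{-2(\alpha-1)}\,dt$ is likewise a red herring: $z/(1-z)$ also blows up at $z=1$ and is univalent.

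With the sign fixed one must take $p(z)=\frac{1-(2\alpha-1)z}{1-z}=1-(\alpha-1)\frac{2z}{1-z}$, which does map $\D$ onto $\{{\rm Re}\,w<\alpha\}$; integrating gives $h'(z)=(1-z)^{2(\alpha-1)}$ and $h(z)=\frac{1}{\beta}\bigl[1-(1-z)^{\beta}\bigr]$ with $\beta=2\alpha-1\in(2,3)$ for $\alpha\in(3/2,2)$ --- which is precisely the paper's extremal function $h_{\beta}$. Your ``conjugate pair'' idea then works, and is exactly how the paper concludes: $h_{\beta}$ has real coefficients, so ${\rm Re}\,h_{\beta}(re^{i\theta})={\rm Re}\,h_{\beta}(re^{-i\theta})$, and since ${\rm Im}\,h_{\beta}(re^{i\theta})=-\frac{1}{\beta}e^{\beta\log|1-re^{i\theta}|}\sin\bigl[\beta\arg(1-re^{i\theta})\bigr]$ with $\arg(1-re^{i\theta})$ filling out $(-\pi/2,0)\cup(0,\pi/2)$, the condition $\beta>2$ lets one choose $r_0,\theta_0$ with $\beta\arg(1-r_0e^{i\theta_0})=-\pi$, killing the sine factor and forcing $h_{\beta}(r_0e^{i\theta_0})=h_{\beta}(r_0e^{-i\theta_0})$ at two distinct points of $\D$. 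Note this pins down the threshold you left open: the trick needs $\beta\cdot\frac{\pi}{2}>\pi$, i.e., $\alpha>3/2$, which is exactly the ``delicate computation'' your sketch postpones. As written, the proposal does not prove the theorem on $(3/2,2)$, and that range is its entire content, since $[2,+\infty)$ is already covered by \cite{kpe}.
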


\begin{proof}
We consider the analytic function $h_{\beta}\in \mathcal{A}$ given by
$$h_{\beta}(z)=\frac{1}{\beta}\left[1-(1-z)^{\beta}\right]\quad\left(2<\beta<3;\ z\in\D\right).$$ It follows that $$1+\frac{zh_{\beta}''(z)}{h_{\beta}'(z)}=\frac{1-\beta z}{1-z},$$ and therefore,  $${\rm Re}\left(1+\frac{zh_{\beta}''(z)}{h_{\beta}'(z)}\right)<\frac{1+\beta}{2}\quad\left(\frac{3}{2}<\frac{1+\beta}{2}<2\right),$$ which implies that
 $$h_{\beta}\in\mathcal{G}((1+\beta)/2)=\mathcal{G}(\alpha)\quad\left(\frac{3}{2}<\alpha<2\right).$$

In what follows, we shall prove that the function $h_{\beta}$ is not univalent in $\D$. It easily to verify that $h_{\beta}$ have real
coefficients, and thus, $h_{\beta}(z)=\overline{h_{\beta}(\overline{z})}$ for all $z\in\D$. In particular, we see that
$${\rm Re}\left(h_{\beta}\left(r e^{i \theta}\right)\right)={\rm Re}\left(h_{\beta}\left(r e^{-i \theta}\right)\right)
$$
for some $r\in (0,1)$ and $\theta\in (-\pi,0)\cup(0,\pi)$.

It is sufficient to show that there exist $r_{0}\in (0,1)$ and $\theta_{0}\in (-\pi,0)\cup(0,\pi)$ such that
$${\rm Im}\left(h_{\beta}\left(r_{0} e^{i \theta_{0}}\right)\right)={\rm Im}\left(h_{\beta}\left(r_{0} e^{-i \theta_{0}}\right)\right)=0.
$$
In view of
\begin{equation*}
{\rm Im}\left(h_{\beta}(z)\right)
={\rm Im}\left(\frac{1-(1-z)^{\beta}}{\beta}\right)
=-{\rm Im}\left(\frac{e^{\beta\log(1-z)}}{\beta}\right),
\end{equation*}
we see that
\begin{equation*}
\begin{split}
{\rm Im}\left(h_{\beta}\left(r e^{i\theta}\right)\right)&=-{\rm Im}\left(\frac{e^{\beta\log\left(1-r e^{i\theta}\right)}}{\beta}\right)\\
&=-\frac{e^{\beta\log|1-r e^{i\theta}|}}{\beta}\sin \left[\beta\arg\left(1-r e^{i\theta}\right)\right],
\end{split}
\end{equation*}
and
\begin{equation*}
\begin{split}
-{\rm Im}\left(h_{\beta}\left(r e^{-i\theta}\right)\right)=\frac{e^{\beta\log|1-r e^{-i\theta}|}}{\beta}\sin \left[\beta\arg\left(1-r e^{-i\theta}\right)\right]={\rm Im}\left(h_{\beta}\left(r e^{i\theta}\right)\right).
\end{split}
\end{equation*}
By noting that $$\arg\left(1-r e^{i\theta}\right)\in\left(-\frac{\pi}{2},0\right)\cup \left(0,\frac{\pi}{2}\right),
$$ we deduce that for each $\beta\in(2, 3)$, there exist $r_{0}\in (0,1)$ and $\theta_{0}\in (-\pi,0)\cup(0,\pi)$ such that
$$\sin\left[\beta\arg\left(1-r_{0} e^{i\theta_{0}}\right)\right]=0.
$$
It follows that $${\rm Im}\left(h_{\beta}\left(r_{0} e^{i \theta_{0}}\right)\right)={\rm Im}\left(h_{\beta}\left(r_{0} e^{-i \theta_{0}}\right)\right)=0.$$ Therefore, we see that there exist two distinct points $z_1 = r_{0} e^{i \theta_{0}}$ and $z_2 = r_{0} e^{-i \theta_{0}}$ in $\D$ such that $h_{\beta}(z_1) = h_{\beta}(z_2)$, which shows that the function $h_{\beta}(z)$ is not univalent in $\D$.
Thus, we deduce that the class $\mathcal{G}(\alpha)$ always contains a non-univalent function for each $\alpha\in(3/2,2)$.

Moreover, by noting that the class $\mathcal{G}(\alpha)$ is not univalent in $\D$ for $\alpha\in[2,+\infty)$ (see \cite[Example 2.1]{kpe}), we deduce that the assertion of Theorem \ref{t001} holds.
\end{proof}

To illustrate our counterexample, we present the image domain of $\D$ under the function $h_{5/2}(z)={2}/{5}\left[1-(1-z)^{5/2}\right]$ (see Figure \ref{fig1}).
\begin{figure}[ht]
 \centering
\includegraphics[width=4.61in]{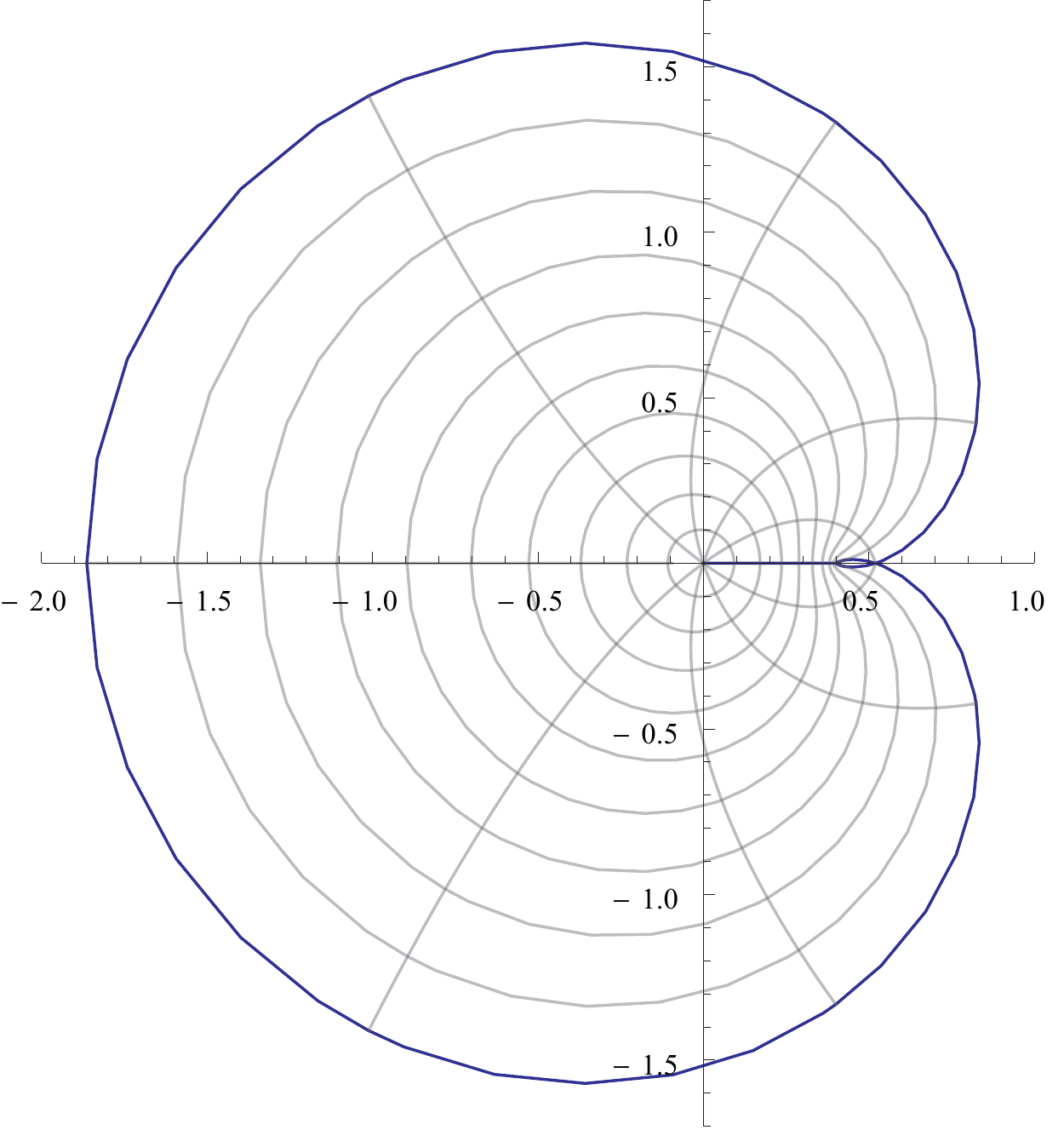}
\caption{The image of $\D$ under the function $h_{5/2}(z)={2}/{5}\left[1-(1-z)^{5/2}\right]$.}\label{fig1}
\end{figure}

\vskip.20in

\section{Properties and characteristics of the class $\mathcal{F}(\alpha,\lambda,n)$}

Let us recall the following lemma, due to Obradovi\'{c} \textit{et al.} \cite{opw}, in a slightly modified form, which will be required in the proof of Theorem \ref{t1}.

\begin{lemma}\label{lem1}
If $h(z)=z+\sum_{k=2}^{\infty}a_kz^k$ satisfies the condition \eqref{101} with $1<\alpha\leq 3/2$, then
\begin{equation}\label{41}\abs{a_k}\leq\frac{2(\alpha-1)}{(k-1)k}\quad(k\geq 2),\end{equation}
with the extremal function given by
$$h(z)=\int_0^z\left(1-t^{k-1}\right)^{\frac{2(\alpha-1)}{k-1}}dt\quad(k\geq 2).$$
\end{lemma}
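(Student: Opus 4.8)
The plan is to convert the one-sided real-part condition \eqref{101} into a Carath\'eodory function and then read off the coefficients of $h'$. Since \eqref{101} with $1<\alpha\le 3/2$ gives ${\rm Re}\big(zh''(z)/h'(z)\big)<\alpha-1$, and the left-hand side vanishes at the origin, the function $P(z):=1-\frac{zh''(z)}{(\alpha-1)h'(z)}$ satisfies $P(0)=1$ and ${\rm Re}\,P(z)>0$ in $\D$; that is, $P$ lies in the Carath\'eodory class $\mathcal{P}$, so writing $P(z)=1+\sum_{n\ge 1}p_nz^n$ we have the sharp bounds $|p_n|\le 2$. Rearranging gives $zh''(z)/h'(z)=(\alpha-1)\big(1-P(z)\big)=-(\alpha-1)\sum_{n\ge 1}p_nz^n$, and integrating the logarithmic derivative yields $\log h'(z)=-(\alpha-1)\sum_{n\ge 1}\frac{p_n}{n}z^n$. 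Equivalently, $zh''(z)/h'(z)$ is subordinate to the convex (half-plane) map $-2(\alpha-1)z/(1-z)$, a form I would keep for the sharpness discussion.

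Writing $\log h'(z)=\sum_{n\ge 1}\gamma_nz^n$ with $\gamma_n=-(\alpha-1)p_n/n$, the above already gives $|\gamma_n|\le 2(\alpha-1)/n$. Since $ka_k$ is exactly the coefficient of $z^{k-1}$ in $h'(z)=\exp\big(\sum_{n\ge 1}\gamma_nz^n\big)$, the target \eqref{41} is the assertion that $|ka_k|\le 2(\alpha-1)/(k-1)$, i.e.\ that the whole coefficient of $z^{k-1}$ is controlled by the single first-order term $\gamma_{k-1}$. To locate the extremal configuration I would use the univalence of the dominant to write $zh''(z)/h'(z)=-2(\alpha-1)w(z)/(1-w(z))$ for a Schwarz function $w$, so that $\log h'(z)=-2(\alpha-1)\int_0^z \frac{w(t)}{t\,(1-w(t))}\,dt$; the choice $w(z)=z^{k-1}$ returns $\log h'(z)=\frac{2(\alpha-1)}{k-1}\log(1-z^{k-1})$, i.e.\ the stated extremal $h(z)=\int_0^z(1-t^{k-1})^{2(\alpha-1)/(k-1)}\,dt$. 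For it, expanding $h'(z)=(1-z^{k-1})^{2(\alpha-1)/(k-1)}$ gives $ka_k=-2(\alpha-1)/(k-1)$, so equality holds in \eqref{41} and the bound is best possible.

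The main obstacle is precisely this sharpness: the estimate cannot be obtained from $|\gamma_n|\le 2(\alpha-1)/n$ (equivalently $|p_n|\le 2$) by the triangle inequality. Indeed, expanding the exponential gives $[z^{k-1}]h'=\gamma_{k-1}+R$, where $R$ collects the products $\gamma_{n_1}\cdots\gamma_{n_j}$ with $n_1+\cdots+n_j=k-1$ and $j\ge 2$, and bounding each factor crudely already overshoots $2(\alpha-1)/(k-1)$ for $k\ge 3$ (for instance $[z^2]h'=\gamma_2+\tfrac12\gamma_1^2$ with the crude bound $(\alpha-1)+2(\alpha-1)^2$); the identical overshoot appears if one instead runs the coefficient recursion coming from $zh''=(\alpha-1)(1-P)h'$. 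The cancellation that rescues the bound reflects that the $p_n$ are the coefficients of one fixed Carath\'eodory function, not free numbers of modulus $\le 2$. I would therefore control $R$ either by feeding the refined coefficient inequalities for $\mathcal{P}$ (relations of the type $|p_2-\tfrac12 p_1^2|\le 2-\tfrac12|p_1|^2$ and their higher analogues) into an induction on $k$, or, more cleanly, by solving the extremal problem $\max|[z^{k-1}]h'|$ over Schwarz functions $w$ and identifying the maximiser as $w(z)=\eta z^{k-1}$ with $|\eta|=1$. This is exactly the sharp estimate of Obradovi\'c \textit{et al.} \cite{opw}, so in the write-up I would reduce to their statement once the reformulation above is in place.
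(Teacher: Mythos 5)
You should know that the paper contains no proof of this lemma at all: it is quoted ``in a slightly modified form'' from Obradovi\'{c}, Ponnusamy and Wirths \cite{opw}, so your closing reduction to their sharp estimate lands you exactly where the paper itself stands, and in that sense there is no gap relative to the paper. The material you supply along the way is correct and goes beyond what the paper records: the Carath\'eodory reformulation $P(z)=1-\frac{zh''(z)}{(\alpha-1)h'(z)}\in\mathcal{P}$ (valid since $\alpha-1>0$), the equivalent subordination $zh''(z)/h'(z)\prec -2(\alpha-1)z/(1-z)$ with its Schwarz-function representation --- which is precisely the device the paper deploys later in Theorem \ref{t3} --- the verification that $w(z)=z^{k-1}$ yields $h'(z)=\left(1-z^{k-1}\right)^{2(\alpha-1)/(k-1)}$ and hence equality $\abs{ka_k}=2(\alpha-1)/(k-1)$ for the stated extremal, and, most valuably, the diagnosis that $\abs{p_n}\le 2$ together with the triangle inequality cannot produce \eqref{41}: your computation $[z^2]h'=\gamma_2+\frac{1}{2}\gamma_1^2$ with crude bound $(\alpha-1)+2(\alpha-1)^2$ correctly exhibits the overshoot against the target $\alpha-1$ at $k=3$, so the genuine content of the lemma is the cancellation among coefficients of a single fixed Carath\'eodory function. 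Since that cancellation is exactly the theorem of \cite{opw} (their parameter $\mu$ equals $2(\alpha-1)$ here), deferring to it is a legitimate resolution in this context; only if a self-contained proof were demanded would one of your two proposed routes --- refined $\mathcal{P}$-coefficient inequalities fed into an induction, or solving the extremal problem over Schwarz functions --- need to be carried out in full, and the second is essentially how the bound is established in the literature.
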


\begin{theorem}\label{t1}
Let $f=h+\overline{g}\in\mathcal{F}(\alpha,\lambda,n)$ be of the form \eqref{111}. Then
the coefficients $a_k\ (k\geq 2)$ of $h$ satisfy \eqref{41},
furthermore, the coefficients $b_k\ (k=n+1, n+2, \ldots;\, n\in\N)$ of $g$ satisfy \begin{equation}\label{42}\abs{b_{n+1}}\leq\frac{\abs{\lambda}}{n+1}\quad(n\in\N)\  \ {\it and} \ \ \abs{b_{k+n}}\leq\frac{2\abs{\lambda}(\alpha-1)}{(k-1)k(k+n)}\quad\left(k\in\N\setminus\{1\};\, n\in\N\right).\end{equation}
The bounds are sharp for the extremal function given by
$$f(z)=\int_0^z\left(1-t^{k-1}\right)^{\frac{2(\alpha-1)}{k-1}}dt+\overline{\int_0^z\lambda t^n\left(1-t^{k-1}\right)^{\frac{2(\alpha-1)}{k-1}}dt}\quad\left(n\in\N\right).$$
\end{theorem}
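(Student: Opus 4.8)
The plan is to push every statement about the co-analytic coefficients back onto the coefficients of the analytic part, using the structural relation \eqref{114}, and then invoke Lemma \ref{lem1}. Since $f\in\mathcal{F}(\alpha,\lambda,n)$ forces $h$ to satisfy \eqref{101} with $1<\alpha\le 3/2$ through \eqref{113}, the bound \eqref{41} for the $a_k$ is immediate from Lemma \ref{lem1}; so the real content is to convert \eqref{114} into a coefficient identity. Writing $h'(z)=\sum_{k\ge 1}ka_kz^{k-1}$ with $a_1=1$ and $g'(z)=\sum_{j\ge 1}jb_jz^{j-1}$, I would substitute into $g'(z)=\lambda z^nh'(z)$ and compare the coefficient of $z^{k-1+n}$ on both sides. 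This produces the single identity
$$(k+n)\,b_{k+n}=\lambda\,k\,a_k\qquad(k\in\N),$$
which is the backbone of the whole argument and expresses every $b$-coefficient explicitly through an $a$-coefficient.

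From here the two estimates in \eqref{42} come from the two regimes of the identity. Taking $k=1$ and using $a_1=1$ gives $b_{n+1}=\lambda/(n+1)$ at once, hence $\abs{b_{n+1}}=\abs{\lambda}/(n+1)$ with equality, which is the first bound in \eqref{42}. For $k\in\N\setminus\{1\}$ I would take absolute values in the identity and insert the estimate \eqref{41} for $\abs{a_k}$, which yields the second bound in \eqref{42} for $\abs{b_{k+n}}$.

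For sharpness I would verify directly that the displayed extremal function meets \eqref{41} and \eqref{42}. For this $h$ one has $h'(t)=(1-t^{k-1})^{2(\alpha-1)/(k-1)}$, whose binomial expansion begins $1-\frac{2(\alpha-1)}{k-1}t^{k-1}+\cdots$; integrating term by term gives $a_k=-\frac{2(\alpha-1)}{(k-1)k}$, so \eqref{41} holds with equality. The same expansion applied to $g(z)=\int_0^z\lambda t^nh'(t)\,dt$ produces $b_{n+1}=\lambda/(n+1)$ and shows that the relevant $b$-coefficient attains the bound in \eqref{42}. A short preliminary check is that this $h$ really lies in $\mathcal{G}(\alpha)$: a direct computation gives $1+zh''(z)/h'(z)=1-2(\alpha-1)\,z^{k-1}/(1-z^{k-1})$, and since $w\mapsto w/(1-w)$ maps $\D$ into $\{{\rm Re}\,w>-1/2\}$, the inequality ${\rm Re}(1+zh''/h')<\alpha$ required by \eqref{113} follows.

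The only genuine obstacle is bookkeeping rather than analysis: one must track the power shift by $n$ correctly when comparing coefficients in $g'=\lambda z^nh'$, and then keep the two cases $k=1$ (producing $b_{n+1}$) and $k\ge 2$ (producing $b_{k+n}$) of \eqref{42} cleanly separated. Once the identity $(k+n)b_{k+n}=\lambda k a_k$ is in place, both parts of \eqref{42} reduce to a direct substitution of Lemma \ref{lem1}, and the extremal function supplies sharpness.
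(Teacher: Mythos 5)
Your proposal is correct and follows essentially the same route as the paper: comparing coefficients of $z^{k+n-1}$ in $g'(z)=\lambda z^n h'(z)$ to obtain the identity $(k+n)b_{k+n}=\lambda k a_k$ and then invoking Lemma \ref{lem1}, exactly as the paper does. Your explicit verification that the extremal function lies in $\mathcal{G}(\alpha)$ and attains equality is a welcome addition the paper leaves implicit, but the argument is the same.
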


\begin{proof}
Comparing the coefficients of $z^{k+n-1}$ of both sides in \eqref{114}, we obtain \begin{equation}\label{6}(k+n)b_{k+n}=\lambda k a_k\quad (k,\,n\in\N;\, a_1=1).\end{equation} Combining Lemma \ref{lem1} with \eqref{6}, we readily get the desired coefficient inequalities \eqref{42} of Theorem \ref{t1}.
\end{proof}

%\begin{example}
%{\rm
%Let $f=h+\overline{g}\in\mathcal{M}(\alpha,-1/2)$ be of the form \eqref{11}. Then
%\begin{equation*}\label{23}
%|a_{k}|\leq \frac{k+1}{2} \quad {\rm and} \quad
%|b_{k}|\leq \frac{(k-1)|\alpha|}{2} \qquad (k=2,3,\ldots).
%\end{equation*}
%Moreover, these bounds are sharp for each $|\alpha|\leq 1$, with the extremal functions
%\begin{equation*}
%f_{\alpha,-1/2}(z)=\frac{1}{2}\bigg(\frac{z}{(1-\gamma z)^2}+\frac{z}{1-\gamma z}\bigg)
%+\overline{\frac{\alpha}{2\gamma}\bigg(\frac{z}{(1-\gamma z)^2}-\frac{z}{1-\gamma z}\bigg)}
%\qquad (|\gamma|=1;\ z\in\D).
%\end{equation*}}
%\end{example}

%Next, we give the covering theorem for the class $\mathcal{F}(\alpha,\lambda,n)$.

%\begin{theorem}\label{t42}
%Let $f\in\mathcal{M}(\alpha,\zeta,n)$ with $0\leq\alpha<1$.
%Then the range $f(\D)$ contains the disk
%\begin{equation*}
%|\omega|<r(\alpha,\zeta,n)
%=\left\{\begin{array}{ll}
%\log 2-\frac{\zeta \, _2F_1(1,\, n+1;\, n+2;\, -1)}{n+1}  &(\alpha=1/2), \\ \\
%\frac{2^{2 \alpha -1}-1}{2 \alpha -1}-\frac{\zeta \, _2F_1(n+1,\, 2-2 \alpha;\, n+2;\, -1)}{n+1}  &(\alpha\neq 1/2).
%\end{array}\right.
%\end{equation*}
%\end{theorem}

%\begin{proof}
%By putting $r\rightarrow1^{-}$ in the lower bound for $|f(z)|$ in Theorem \ref{t401}, we get the desired result.
%The sharpness is similar to that of Theorem \ref{t401}, we choose to omit the details.
%\end{proof}

The Fekete-Szeg\"{o} functional for
$\abs{a_{3}-\delta a_{2}^{2}}$ of the class $\mathcal{G}(\alpha)$ with $\alpha\in(1,3/2]$ was discussed by Obradovi\'{c} \textit{et al.} \cite{opw}, which will be useful in the proof of the upper bounds for $\abs{b_{3}-\delta b_{2}^{2}}$
of functions in the class $\mathcal{F}(\alpha,\lambda)$. We here present its modified form.

\begin{lemma}\label{lem2} Let $f\in\mathcal{G}(\alpha)$ with $\alpha\in(1,3/2]$. Then
\begin{equation}\label{31}
\abs{a_{3}-\delta a_{2}^{2}}\leq\left\{\begin{array}{cc}
\frac{\alpha-1}{3}\abs{3+\delta-(2+\delta)\alpha}
&\left(\abs{\delta-\frac{3-2\alpha}{3(\alpha-1)}}\geq\frac{1}{3(\alpha-1)}\right), \\\\
\frac{\alpha-1}{3}\ \  &\left(\abs{\delta-\frac{3-2\alpha}{3(\alpha-1)}}<\frac{1}{3(\alpha-1)}\right).\end{array}\right.
\end{equation}
Equality in the Fekete-Szeg\"{o} functional is attained in each case.
\end{lemma}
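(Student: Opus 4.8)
The plan is to run the classical Fekete--Szeg\"o machinery for a class defined by subordination to a half-plane. First I would encode the defining condition \eqref{101}: writing $w(z):=1+zh''(z)/h'(z)$, we have $w(0)=1$ and $\mathrm{Re}\,w<\alpha$, so I would represent $w(z)=\alpha-(\alpha-1)p(z)$ for some $p(z)=1+c_1z+c_2z^2+\cdots$ with $\mathrm{Re}\,p>0$ in $\D$. The normalization $p(0)=1$ is then automatic and the Carath\'eodory bounds $|c_k|\le2$ become available. (The substitution can be sanity-checked against the $k=2$ extremal of Lemma~\ref{lem1}, namely $h(z)=\int_0^z(1-t)^{2(\alpha-1)}\,dt$, for which one computes $p(z)=(1+z)/(1-z)$.)

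Next I would extract the coefficients. Expanding the identity $zh''=(\alpha-1)(1-p)h'$ and matching the coefficients of $z$ and $z^2$ gives $a_2=-\tfrac12(\alpha-1)c_1$ and $a_3=\tfrac16(\alpha-1)^2c_1^2-\tfrac16(\alpha-1)c_2$. Consequently the functional reduces to a single Carath\'eodory expression,
\begin{equation*}
a_3-\delta a_2^2=-\frac{\alpha-1}{6}\bigl(c_2-v\,c_1^2\bigr),\qquad v=\frac{(\alpha-1)(2-3\delta)}{2},
\end{equation*}
so the whole problem collapses to estimating $|c_2-v\,c_1^2|$ over functions with positive real part.

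The key analytic input is then the Keogh--Merkes inequality: for such $p$ one has $|c_2-v\,c_1^2|\le2\max\{1,\,|2v-1|\}$, sharp for $p(z)=(1+z)/(1-z)$ in the regime $|2v-1|\ge1$ and for $p(z)=(1+z^2)/(1-z^2)$ in the regime $|2v-1|<1$. I would either cite this or prove it from the standard representation $2c_2=c_1^2+(4-c_1^2)x$ with $|x|\le1$, which after a rotation reduces the estimate to a one-variable maximization in $c_1\in[0,2]$. Multiplying the resulting bound by $(\alpha-1)/6$ and splitting according to whether $|2v-1|\ge1$ or $|2v-1|<1$ produces the two branches of \eqref{31}, the second of which is the constant $\tfrac{\alpha-1}{3}$.

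Finally, sharpness in each case is read off from the extremal functions of Lemma~\ref{lem1}: the $k=2$ function realizes the first (non-constant) branch, while the $k=3$ function $h(z)=\int_0^z(1-t^2)^{\alpha-1}\,dt$, for which $a_2=0$ and $a_3=-\tfrac{\alpha-1}{3}$, realizes the constant bound. I expect the main obstacle to be bookkeeping rather than ideas: one must translate the clean threshold $|2v-1|\gtrless1$ into the stated condition $\bigl|\delta-\tfrac{3-2\alpha}{3(\alpha-1)}\bigr|\gtrless\tfrac{1}{3(\alpha-1)}$ on $\delta$, and confirm that the two branches of \eqref{31} match on the common boundary. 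This continuity-and-threshold check is worth carrying out carefully, since it simultaneously pins down the exact placement of the critical values of $\delta$ and the precise form of the first-branch expression.
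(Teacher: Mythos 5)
Your route is the standard Carath\'eodory reduction, and since the paper gives no proof of Lemma \ref{lem2} at all --- it simply quotes the Fekete--Szeg\"o result of Obradovi\'c \emph{et al.} \cite{opw} ``in a slightly modified form'' --- yours is in substance the same argument as in the cited source. Your intermediate steps check out: writing $1+zh''/h'=\alpha-(\alpha-1)p$ with ${\rm Re}\,p>0$, $p(0)=1$, one indeed gets $a_2=-\tfrac{\alpha-1}{2}c_1$ and $a_3=\tfrac{1}{6}\left[(\alpha-1)^2c_1^2-(\alpha-1)c_2\right]$, hence
\begin{equation*}
a_3-\delta a_2^2=-\frac{\alpha-1}{6}\left(c_2-v\,c_1^2\right),\qquad v=\frac{(\alpha-1)(2-3\delta)}{2},
\end{equation*}
and Keogh--Merkes together with your two extremal functions yields the sharp value $\tfrac{\alpha-1}{3}\max\{1,\abs{2v-1}\}$.

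The gap is precisely the step you deferred as ``bookkeeping'': the threshold $\abs{2v-1}\geq 1$ does \emph{not} translate into the condition printed in \eqref{31}. Since $2v-1=(\alpha-1)(2-3\delta)-1=-\left[3(\alpha-1)\delta-(2\alpha-3)\right]$, your derivation gives the first branch on $\abs{\delta-\tfrac{2\alpha-3}{3(\alpha-1)}}\geq\tfrac{1}{3(\alpha-1)}$ --- center $\tfrac{2\alpha-3}{3(\alpha-1)}$, not $\tfrac{3-2\alpha}{3(\alpha-1)}$ --- with bound $\tfrac{\alpha-1}{3}\abs{3(\alpha-1)\delta-(2\alpha-3)}$, whose $\delta$-coefficient is $3(\alpha-1)$, whereas the printed expression $\abs{3+\delta-(2+\delta)\alpha}=\abs{(3-2\alpha)-(\alpha-1)\delta}$ carries only $(\alpha-1)$. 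In fact \eqref{31} as printed is false: take $\alpha=3/2$, $\delta=1$ and $h(z)=z-z^2/2\in\mathcal{G}$, for which $\abs{a_3-a_2^2}=1/4$, while \eqref{31} (the first branch applies, since $\abs{1-0}=1\geq 2/3$) asserts the bound $\tfrac16\abs{4-\tfrac92}=\tfrac{1}{12}$. The continuity check you proposed would also have exposed this: at $\alpha=3/2$ the printed first branch equals $\abs{\delta}/12$, which at the threshold $\abs{\delta}=2/3$ gives $1/18\neq 1/6$, so the two branches of \eqref{31} do not even match at the boundary. So your proof is correct and sharp, but what it proves is the (correct) statement $\abs{a_3-\delta a_2^2}\leq\tfrac{\alpha-1}{3}\max\left\{1,\abs{3(\alpha-1)\delta-(2\alpha-3)}\right\}$; the final identification with \eqref{31} cannot be completed because the paper's ``modified form'' contains sign and coefficient errors. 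Carry the translation out explicitly and state the corrected branches rather than forcing agreement with \eqref{31}.
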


\begin{theorem}\label{t2}
Let $f\in\mathcal{F}(\alpha,\lambda)$ be of the form \eqref{111}. Then
\begin{equation}\label{32}
\big|b_{3}-\delta b_{2}^{2}\big|\leq \frac{2(\alpha-1)|\lambda|}{3}+\frac{|\delta||\lambda|^{2}}{4}.
\end{equation}
The inequality is sharp.
\end{theorem}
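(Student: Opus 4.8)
The plan is to bypass the Fekete--Szeg\"{o} machinery of Lemma \ref{lem2} altogether and reduce the functional to a single Taylor coefficient of $h$. Since $\mathcal{F}(\alpha,\lambda)=\mathcal{F}(\alpha,\lambda,1)$, the coefficient relation \eqref{6} with $n=1$ reads $(k+1)b_{k+1}=\lambda k a_k$ together with $a_1=1$. Taking $k=1$ gives $b_2=\lambda/2$, and taking $k=2$ gives $b_3=\tfrac{2}{3}\lambda a_2$. Consequently the co-analytic functional contains no contribution from $a_3$ whatsoever, and collapses to an affine expression in $a_2$:
\begin{equation*}
b_3-\delta b_2^2=\frac{2\lambda a_2}{3}-\frac{\delta\lambda^2}{4}.
\end{equation*}

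With this identity in hand, the estimate follows from the triangle inequality,
\begin{equation*}
\abs{b_3-\delta b_2^2}\leq\frac{2\abs{\lambda}}{3}\abs{a_2}+\frac{\abs{\delta}\abs{\lambda}^2}{4},
\end{equation*}
combined with the single coefficient bound $\abs{a_2}\leq\alpha-1$, which is exactly the case $k=2$ of \eqref{41} in Lemma \ref{lem1}. Substituting it produces the asserted inequality \eqref{32}. I would stress that, despite being advertised as useful, the full Fekete--Szeg\"{o} estimate of Lemma \ref{lem2} (which bounds $a_3-\delta a_2^2$) is \emph{not} actually required here: because $b_3$ is proportional to $a_2$ rather than to $a_3$, only the elementary bound on $\abs{a_2}$ enters.

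For sharpness I would take the extremal function of Theorem \ref{t1} corresponding to $k=2$, namely
\begin{equation*}
h(z)=\int_0^z(1-t)^{2(\alpha-1)}\,dt,\qquad g'(z)=\lambda z\,h'(z),
\end{equation*}
for which $h'(z)=(1-z)^{2(\alpha-1)}$ forces $a_2=-(\alpha-1)$, so that $\abs{a_2}=\alpha-1$ and the coefficient estimate is attained. It remains only to check that the two terms add in modulus. When $\lambda$ and $\delta$ are taken real and positive this is immediate, since both $\tfrac{2}{3}\lambda a_2$ and $-\tfrac14\delta\lambda^2$ are then negative real; in general one aligns the arguments of $\tfrac{2}{3}\lambda a_2$ and $\tfrac14\delta\lambda^2$ (equivalently, chooses $\arg\delta=-\arg\lambda$, or rotates the extremal function via $e^{-i\phi}h(e^{i\phi}z)$), which realizes equality in \eqref{32}.

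I do not anticipate any genuine analytic obstacle: the whole computation is routine once the coefficient relation \eqref{6} is invoked. The only point worth isolating is the observation in the first paragraph that the functional degenerates to a linear expression in $a_2$, which is precisely what renders Lemma \ref{lem2} dispensable and keeps the proof short.
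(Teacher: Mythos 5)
Your proof is correct and is essentially identical to the paper's: the paper likewise computes $b_2=\lambda/2$ and $b_3=\tfrac{2}{3}\lambda a_2$ from $g'(z)=\lambda z h'(z)$, then applies the triangle inequality with $\abs{a_2}\leq \alpha-1$, and although it formally cites the Fekete--Szeg\"{o} estimate \eqref{31}, it never uses more than that single coefficient bound, confirming your observation that Lemma \ref{lem2} is dispensable here. Your explicit sharpness verification via the extremal function $h'(z)=(1-z)^{2(\alpha-1)}$ and a rotation to align arguments is a small bonus, as the paper asserts sharpness without exhibiting it.
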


\begin{proof}
By noting that $g'(z)=\lambda zh'(z)$ for $f\in\mathcal{F}(\alpha,\lambda)$, we have
\begin{equation*}
\sum_{k=2}^{\infty}kb_{k}z^{k-1}=\lambda\sum_{k=1}^{\infty}ka_{k}z^{k}\quad(a_1=1).
\end{equation*}
Clearly, we see that \begin{equation}\label{302}b_{2}=\frac{1}{2}\lambda a_{1}=\frac{1}{2}\lambda\ \ \textrm{and}\ \
b_{3}=\frac{2}{3}\lambda a_{2}.\end{equation} Therefore, by virtue of \eqref{31} and \eqref{302}, we obtain
\begin{equation*}
\big|b_{3}-\delta b_{2}^{2}\big|
=\left|\frac{2}{3}\lambda a_{2}-\frac{1}{4}\delta\lambda^{2}\right|
\leq \frac{2|\lambda||a_{2}|}{3}+\frac{|\delta||\lambda|^{2}}{4}
\leq \frac{2(\alpha-1)|\lambda|}{3}+\frac{|\delta||\lambda|^{2}}{4}.
\end{equation*}The proof of Theorem \ref{t2} is thus completed.
\end{proof}

By setting $\delta=1$ in Lemma \ref{lem2}, respectively Theorem \ref{t2}, we get the
Zalcman type coefficient inequalities of the class $\mathcal{F}(\alpha,\lambda)$ for the case $k=2$. For recent developments on this topic (see Li and Ponnusamy \cite{lp} and the references therein).

\begin{cor}
Let $f\in\mathcal{F}(\alpha,\lambda)$ be of the form \eqref{111}. Then
$$
\abs{a_{3}-a_{2}^{2}}\leq\left\{\begin{array}{cc}
\frac{\alpha-1}{3}\abs{4-3\alpha}
&\left(\abs{1-\frac{3-2\alpha}{3(\alpha-1)}}\geq\frac{1}{3(\alpha-1)}\right), \\\\
\frac{\alpha-1}{3}\ \  &\left(\abs{1-\frac{3-2\alpha}{3(\alpha-1)}}<\frac{1}{3(\alpha-1)}\right),\end{array}\right.$$
and
$$\big|b_{3}-b_{2}^{2}\big|\leq \frac{2(\alpha-1)|\lambda|}{3}+\frac{|\lambda|^{2}}{4}\leq\frac{11}{48}.
$$
The inequalities are sharp.
\end{cor}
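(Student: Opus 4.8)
The plan is to recognise both displayed inequalities as the single choice $\delta=1$ in the two results just proved, applied respectively to the analytic part $h$ and to the whole mapping $f$. First I would note that $f=h+\overline{g}\in\mathcal{F}(\alpha,\lambda)$ forces $h$ to satisfy \eqref{113} with $1<\alpha\le 3/2$, so $h\in\mathcal{G}(\alpha)$; hence Lemma \ref{lem2} applies to $h$ and Theorem \ref{t2} applies to $f$.

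For the bound on $\abs{a_3-a_2^2}$ I would simply put $\delta=1$ in \eqref{31}. The two threshold conditions $\abs{\delta-\frac{3-2\alpha}{3(\alpha-1)}}\ge\frac{1}{3(\alpha-1)}$ and its negation pass over to $\abs{1-\frac{3-2\alpha}{3(\alpha-1)}}\ge\frac{1}{3(\alpha-1)}$ and its negation without change, while the first branch simplifies through $3+\delta-(2+\delta)\alpha=4-3\alpha$ to $\frac{\alpha-1}{3}\abs{4-3\alpha}$. Nothing further is needed, and sharpness is inherited verbatim from the extremal functions of Lemma \ref{lem2}, since the $a$-coefficients of $f$ coincide with those of its analytic part $h$.

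For the bound on $\abs{b_3-b_2^2}$ I would put $\delta=1$ in \eqref{32}, which gives $\abs{b_3-b_2^2}\le\frac{2(\alpha-1)\abs{\lambda}}{3}+\frac{\abs{\lambda}^2}{4}$. To reach the numerical value $\tfrac{11}{48}$ I would feed in the two structural constraints defining $\mathcal{F}(\alpha,\lambda)$, namely $\alpha-1\le\tfrac12$ from \eqref{113} and $\abs{\lambda}\le\tfrac12$ from \eqref{114} with $n=1$. These yield $\frac{2(\alpha-1)\abs{\lambda}}{3}\le\tfrac16$ and $\frac{\abs{\lambda}^2}{4}\le\tfrac{1}{16}$, so the sum is at most $\tfrac16+\tfrac1{16}=\tfrac{11}{48}$, with both estimates saturated exactly when $\alpha=3/2$ and $\abs{\lambda}=1/2$.

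The only delicate point, and the closest thing to an obstacle, is the sharpness of the $b$-chain at the value $\tfrac{11}{48}$: this requires every estimate in the proof of Theorem \ref{t2} to be tight at once, so that $\abs{a_2}$ attains its maximum $\alpha-1$ and the arguments of $\tfrac23\lambda a_2$ and $-\tfrac14\lambda^2$ align, turning the triangle inequality into an equality. I would settle this by exhibiting the explicit extremal mapping $f(z)=z-\tfrac12 z^2+\overline{\tfrac14 z^2-\tfrac16 z^3}\in\mathcal{F}(3/2,1/2)$ already pictured in Figure \ref{fig11}; here $b_2=\tfrac14$ and $b_3=-\tfrac16$, so $\abs{b_3-b_2^2}=\bigl|-\tfrac16-\tfrac1{16}\bigr|=\tfrac{11}{48}$, which confirms that the chain is sharp.
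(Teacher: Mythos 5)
Your proposal is correct and follows essentially the same route as the paper, which obtains the corollary precisely by setting $\delta=1$ in Lemma \ref{lem2} and Theorem \ref{t2} and then bounding $\frac{2(\alpha-1)|\lambda|}{3}+\frac{|\lambda|^{2}}{4}$ via $\alpha\leq 3/2$ and $|\lambda|\leq 1/2$. Your explicit verification that $f(z)=z-\tfrac{1}{2}z^{2}+\overline{\tfrac{1}{4}z^{2}-\tfrac{1}{6}z^{3}}\in\mathcal{F}(3/2,1/2)$ attains $|b_{3}-b_{2}^{2}|=\tfrac{11}{48}$ (with $b_{2}=\tfrac14$, $b_{3}=-\tfrac16$, both terms aligned so the triangle inequality is an equality) is a welcome addition the paper leaves implicit in its sharpness claim.
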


Now, we give an integral representation of the mapping $f\in\mathcal{F}(\alpha,\lambda,n)$.

\begin{theorem}\label{t3}
Let $f\in\mathcal{F}(\alpha,\lambda,n)$.
Then
\begin{equation*}\begin{split}f(z)=&\int_0^z\exp\left(2(1-\alpha)\int_0^{\zeta}
\frac{\varpi(t)}{t(1-\varpi(t))}dt\right)d\zeta\\&\quad\quad+\overline{\lambda\int_0^z{{\zeta}^{n}}\cdot\exp\left(2(1-\alpha)\int_0^{\zeta}
\frac{\varpi(t)}{t(1-\varpi(t))}dt\right)d\zeta},\end{split}
\end{equation*}where $\varpi$ is the Schwarz function with
$\varpi(0)=0$ and $\abs{\varpi(z)}<1\ (z\in\D)$.
\end{theorem}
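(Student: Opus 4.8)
The plan is to convert the differential condition \eqref{113} on $h$ into an explicit Schwarz-function representation of $h'$, integrate twice to recover $h$, and then use \eqref{114} to read off $g$. First I would set $p(z)=1+zh''(z)/h'(z)$, so that $p(0)=1$ and, by \eqref{113}, $p$ maps $\D$ into the half-plane $\{w:{\rm Re}\,w<\alpha\}$. Since $1<\alpha$, the M\"obius map $P(z)=\alpha+(1-\alpha)\frac{1+z}{1-z}$ carries $\D$ conformally onto exactly this half-plane with $P(0)=1$ (indeed $\frac{1+z}{1-z}$ maps $\D$ onto $\{{\rm Re}>0\}$, and multiplying by the negative factor $1-\alpha$ and shifting by $\alpha$ lands precisely in $\{{\rm Re}<\alpha\}$). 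Hence $p\prec P$, and because $P$ is univalent the subordination principle furnishes a Schwarz function $\varpi$ with $\varpi(0)=0$ and $\abs{\varpi(z)}<1$ on $\D$ such that $p=P\circ\varpi$.

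Next I would solve for the logarithmic derivative. Substituting $p(z)=P(\varpi(z))$ and using $\frac{1+\varpi}{1-\varpi}-1=\frac{2\varpi}{1-\varpi}$ gives $\frac{zh''(z)}{h'(z)}=\frac{2(1-\alpha)\varpi(z)}{1-\varpi(z)}$, that is, $(\log h')'(z)=\frac{2(1-\alpha)\varpi(z)}{z(1-\varpi(z))}$. Because $\varpi(0)=0$, the factor $\varpi(t)/t$ extends analytically across $t=0$, and $\abs{\varpi}<1$ keeps $1-\varpi$ zero-free, so the integrand is analytic on $\D$ and the integral is well defined. Integrating from $0$ to $z$ with the normalization $h'(0)=1$ yields $h'(z)=\exp\left(2(1-\alpha)\int_0^z\frac{\varpi(t)}{t(1-\varpi(t))}\,dt\right)$, and one further integration with $h(0)=0$ produces the first (analytic) term of the asserted formula.

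Finally, \eqref{114} gives $g'(z)=\lambda z^n h'(z)$, so integrating with $g(0)=0$ yields the second term, and assembling $f=h+\overline{g}$ completes the representation. The only step requiring genuine care is the passage to the Schwarz function: I must confirm that $P$ is \emph{precisely} the conformal map of $\D$ onto $\{{\rm Re}\,w<\alpha\}$ normalized by $P(0)=1$, so that $p\prec P$ follows legitimately from the univalence of $P$ together with $p(0)=P(0)$ and $p(\D)\subseteq P(\D)$, rather than from a mere containment of ranges. I would also verify explicitly that the resulting integrand is analytic at the origin. These are the points I expect to check most carefully; the remaining manipulations are routine integrations.
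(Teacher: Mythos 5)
Your proposal is correct and follows essentially the same route as the paper: your M\"obius map $P(z)=\alpha+(1-\alpha)\frac{1+z}{1-z}$ simplifies to exactly the paper's subordinating function $\frac{1-(2\alpha-1)z}{1-z}$, and the subsequent reduction to $\frac{zh''(z)}{h'(z)}=\frac{2(1-\alpha)\varpi(z)}{1-\varpi(z)}$, the two integrations recovering $h$, and the use of \eqref{114} to obtain $g$ coincide with the paper's argument. You merely make explicit two points the paper leaves implicit, namely that $P$ is the univalent conformal map of $\D$ onto $\{w:{\rm Re}\,w<\alpha\}$ with $P(0)=1$ (justifying the subordination) and that the integrand $\varpi(t)/\bigl(t(1-\varpi(t))\bigr)$ is analytic at the origin.
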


\begin{proof}
Suppose that $f\in\mathcal{F}(\alpha,\lambda,n)$. It follows from \eqref{113} that
\begin{equation}\label{37}1+\frac{zh''(z)}{h'(z)}\prec\frac{1-(2\alpha-1)z}{1-z}\quad(z\in\D),
\end{equation}
where $``\prec"$ denotes the familiar subordination of analytic functions.
By virtue of \eqref{37}, we see that
\begin{equation}\label{38}1+\frac{zh''(z)}{h'(z)}=\frac{1-(2\alpha-1)\varpi(z)}{1-\varpi(z)}\quad(z\in\D),
\end{equation}
where $\varpi$ is the Schwarz function with
$\varpi(0)=0$ and $\abs{\varpi(z)}<1\ (z\in\D)$.
From \eqref{38}, we have
$$\frac{(zh'(z))'}{zh'(z)}-\frac{1}{z}=\frac{2(1-\alpha)\varpi(z)}{z(1-\varpi(z))},$$
which, upon integration, yields
\begin{equation}\label{39}\log(h'(z))=2(1-\alpha)\int_0^z\frac{\varpi(t)}{t(1-\varpi(t))}dt.\end{equation}
We thus find from \eqref{39} that
\begin{equation}\label{310}h(z)=\int_0^z\exp\left(2(1-\alpha)\int_0^{\zeta}
\frac{\varpi(t)}{t(1-\varpi(t))}dt\right)d\zeta.\end{equation}
Combining \eqref{114} with \eqref{310}, we obtain
\begin{equation}\label{311}g(z)=\lambda\int_0^z{{\zeta}^{n}}\cdot\exp\left(2(1-\alpha)\int_0^{\zeta}
\frac{\varpi(t)}{t(1-\varpi(t))}dt\right)d\zeta.\end{equation}
Thus, the assertion of Theorem \ref{t3} follows from \eqref{310} and \eqref{311}.
\end{proof}

\begin{rem}
{\rm Theorem \ref{t3} provides a direct integration method for constructing quasiconformal close-to-convex harmonic mappings by choosing suitable Schwarz functions $\varpi$.}
\end{rem}

The following lemma due to Maharana \textit{et al.} \cite{mps} will play a crucial role in the proof of our last three results.

\begin{lemma}\label{lem3}
If $h\in\mathcal{G}$, then for $\abs z=r<1$, the following statements are true.
\begin{enumerate}
\item
$$\abs{\frac{zh''(z)}{h'(z)}}\leq\frac{r}{1-r}.$$
The inequality is sharp and equality is attained for the function \begin{equation}\label{4001}h(z)=z-\frac{z^2}{2}.\end{equation}
\item
\begin{equation}\label{4002}1-r\leq\abs{h'(z)}\leq 1+r.\end{equation} The inequalities are sharp and equalities are attained for the function given by \eqref{4001}.
\item
If $h(z)=\mathcal{S}_n(z)+\Sigma_n(z)$, with $\Sigma_n(z)=\sum_{k=n+1}^{\infty}a_kz^k$, then
$$\abs{\Sigma'_n(z)}\leq r^n\phi(r,1,n)\ {\it and}\ \abs{z\Sigma''_n(z)}\leq\frac{r^n}{1-r},$$
where $\phi(r,1,n)$ is the unified zeta function which is defined by the series
$$\phi(z,s,a)=\sum_{k=0}^{\infty}\frac{z^k}{(k+a)^s}\quad(\abs z<1;\, \Re(s)>1;\, a\neq 0,-1,-2,\ldots).$$
\end{enumerate}
\end{lemma}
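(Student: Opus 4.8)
The plan is to derive all three estimates from the Schwarz-function representation already obtained in the proof of Theorem \ref{t3}. Specializing \eqref{38} to $\alpha=3/2$ gives the clean identity $\frac{zh''(z)}{h'(z)}=\frac{-\varpi(z)}{1-\varpi(z)}$, where $\varpi$ is a Schwarz function, so by the Schwarz lemma $\abs{\varpi(z)}\leq\abs z=r$ for $\abs z=r<1$. For part (1) I would bound the modulus of the M\"obius expression $\Phi(w)=-w/(1-w)$ over the disk $\abs w\leq r$: since $\Phi$ is analytic there, the maximum principle pushes the extremum to $\abs w=r$, and on that circle $\abs{\Phi(w)}=r/\abs{1-w}$ is largest when $\abs{1-w}$ is smallest, i.e. at $w=r$, giving $\abs{zh''/h'}\leq r/(1-r)$. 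The extremal function $h(z)=z-z^2/2$ realizes $\varpi(z)=z$ and attains equality at $z=r$.

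For part (2) I would work with the radial logarithmic derivative
$$\frac{\partial}{\partial r}\log\abs{h'(re^{i\theta})}=\frac1r\,{\rm Re}\!\left(\frac{zh''(z)}{h'(z)}\right)\qquad(z=re^{i\theta}),$$
and integrate in $r$ from $0$ (where $\log\abs{h'}=0$). The crucial input is a \emph{sharp} two-sided estimate of ${\rm Re}\big({-}\varpi/(1-\varpi)\big)$. Writing $-w/(1-w)=1-1/(1-w)$ and tracking the image of $\abs w\leq r$ under $w\mapsto 1/(1-w)$ (a disk with real-axis diameter $[1/(1+r),1/(1-r)]$) yields $-r/(1-r)\leq{\rm Re}\big({-}\varpi/(1-\varpi)\big)\leq r/(1+r)$. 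Substituting into the displayed identity and integrating gives $\log(1-r)\leq\log\abs{h'}\leq\log(1+r)$, i.e. $1-r\leq\abs{h'(z)}\leq 1+r$, with equality again for $h(z)=z-z^2/2$ at $z=r$ and $z=-r$ respectively.

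Part (3) is a termwise comparison. Lemma \ref{lem1} at $\alpha=3/2$ gives $\abs{a_k}\leq 1/\big((k-1)k\big)$, whence $\abs{ka_k}\leq 1/(k-1)$ and $\abs{k(k-1)a_k}\leq 1$. Then $\abs{\Sigma_n'(z)}\leq\sum_{k=n+1}^\infty\abs{ka_k}r^{k-1}\leq\sum_{k=n+1}^\infty r^{k-1}/(k-1)$; reindexing by $j=k-1$ turns the sum into $\sum_{j=n}^\infty r^j/j=r^n\sum_{m=0}^\infty r^m/(m+n)=r^n\phi(r,1,n)$, which is exactly the claimed bound. Similarly $\abs{z\Sigma_n''(z)}\leq\sum_{k=n+1}^\infty r^{k-1}=r^n/(1-r)$, a geometric series.

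I expect the only genuinely delicate step to be the sharp real-part estimate in part (2): the modulus bound from part (1) alone would only give the weaker upper bound $\abs{h'}\leq 1/(1-r)$, so obtaining the optimal $1+r$ forces the asymmetric computation of the exact range of ${\rm Re}\big({-}\varpi/(1-\varpi)\big)$ via the image disk. The rest is bookkeeping, with the main care being the reindexing that matches the tail series to the Lerch transcendent $\phi(r,1,n)$.
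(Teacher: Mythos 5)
Your proposal is correct, but note that there is nothing internal to compare it against: the paper does not prove Lemma \ref{lem3} at all, quoting it from Maharana \textit{et al.} \cite{mps}, so your write-up in effect supplies the omitted argument, and it does so using only ingredients already present in the paper. Your starting identity $zh''(z)/h'(z)=-\varpi(z)/(1-\varpi(z))$ is exactly \eqref{38} specialized to $\alpha=3/2$, and the maximum-modulus computation for $\Phi(w)=-w/(1-w)$ on $\abs{w}\le r$ correctly yields $r/(1-r)$, attained for $\varpi(z)=z$, i.e.\ for \eqref{4001}. In part (2) you rightly identified the one delicate point: the modulus bound from part (1) only gives $\abs{h'}\le 1/(1-r)$, and the sharp bound $1+r$ requires the asymmetric range $-r/(1-r)\le \Re\bigl(-w/(1-w)\bigr)\le r/(1+r)$ on $\abs{w}\le r$; your computation of this range via the image of $\abs{u-1}\le r$ under inversion (a disk with real diameter $[1/(1+r),1/(1-r)]$) is correct, and the radial integration of $\partial_r\log\abs{h'(re^{i\theta})}=\frac1r\Re\bigl(zh''(z)/h'(z)\bigr)$ is legitimate because $h'=\exp(\cdots)$ never vanishes (cf.\ \eqref{39}), so $\log\abs{h'}$ is smooth along each ray and the Schwarz-lemma bound $\abs{\varpi(\rho e^{i\theta})}\le\rho$ applies at every intermediate radius. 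The sharpness claims also check out: for $h(z)=z-z^2/2$ one has $\abs{h'(r)}=1-r$ and $\abs{h'(-r)}=1+r$. Part (3) is, as you say, bookkeeping: Lemma \ref{lem1} at $\alpha=3/2$ gives $\abs{a_k}\le 1/\bigl(k(k-1)\bigr)$, so $\abs{\Sigma_n'(z)}\le\sum_{k=n+1}^{\infty}r^{k-1}/(k-1)=r^n\phi(r,1,n)$ after the reindexing $j=k-1$, and $\abs{z\Sigma_n''(z)}\le\sum_{k=n+1}^{\infty}r^{k-1}=r^n/(1-r)$ by the geometric series. I find no gaps; if anything, your derivation has the virtue of being self-contained within the paper's own framework (the subordination \eqref{38} and the coefficient bound \eqref{41}) rather than appealing to the external source.
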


We now give the growth theorem for the class $\mathcal{F}(\alpha,\lambda,n)$.

\begin{theorem}\label{t401}
Let $f\in\mathcal{F}(\alpha,\lambda,n)$.
Then
%\begin{equation}\label{411}
%\int_{0}^{r}{(1-\xi)(1-|\lambda|\xi^n)}d\xi
%\leq |f(z)| \leq
%\int_{0}^{r}{(1+\xi)(1+|\lambda|\xi^n)}d\xi\quad(r=|z|<1).
%\end{equation}
\begin{equation}\label{411}
\begin{split}
&r \left[|\lambda|  \left(\frac{r}{n+2}-\frac{1}{n+1}\right) r^n-\frac{r}{2}+1\right]
\leq \abs{f(z)}\\
&\qquad\qquad\qquad\leq r \left[|\lambda|  \left(\frac{r}{n+2}+\frac{1}{n+1}\right) r^n+\frac{r}{2}+1\right].
\end{split}
\end{equation}
The inequalities are sharp.
\end{theorem}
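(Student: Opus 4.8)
The plan is to deduce both inequalities from the distortion bound for the analytic part. Since $1<\alpha\le 3/2$, condition \eqref{113} shows $\mathcal{G}(\alpha)\subseteq\mathcal{G}(3/2)=\mathcal{G}$, so $h\in\mathcal{G}$ and Lemma \ref{lem3}(2) gives $1-r\le|h'(z)|\le 1+r$ on $|z|=r$. Writing $f=h+\overline g$ and using the triangle inequality I would bracket
\[
|h(z)|-|g(z)|\le|f(z)|\le|h(z)|+|g(z)|,
\]
and then estimate $|h|$ and $|g|$ by integrating $h'$ along the radial segment $\zeta=te^{i\theta}$, recalling from \eqref{114} that $g(z)=\lambda\int_0^z\zeta^n h'(\zeta)\,d\zeta$.

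For the upper bound this is routine and produces \eqref{411} on the nose: $|h(z)|\le\int_0^r(1+t)\,dt=r+\tfrac{r^2}{2}$ and $|g(z)|\le|\lambda|\int_0^r t^n(1+t)\,dt=|\lambda|(\tfrac{r^{n+1}}{n+1}+\tfrac{r^{n+2}}{n+2})$, and adding these and factoring out $r$ gives the right-hand side of \eqref{411}. The lower bound on $|h|$ I would get from the univalence of $h$: if $\Gamma$ denotes the $h$-preimage of the segment $[0,h(z)]$, then $|h(z)|=\int_\Gamma|h'(\zeta)|\,|d\zeta|\ge\int_\Gamma(1-|\zeta|)\,|d\zeta|\ge\int_0^r(1-t)\,dt=r-\tfrac{r^2}{2}$, the last inequality because the radially decreasing weight $1-|\zeta|$ integrates over any arc joining $0$ to a point of modulus $r$ to at least its radial integral.

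The genuinely delicate step, and the one I expect to be the main obstacle, is matching the \emph{sharp} form of the lower bound in \eqref{411}. Combining $|f|\ge|h|-|g|$ with $|h|\ge r-\tfrac{r^2}{2}$ and the maximal estimate $|g|\le|\lambda|(\tfrac{r^{n+1}}{n+1}+\tfrac{r^{n+2}}{n+2})$ yields only a strictly weaker constant, whereas \eqref{411} carries the cancelling form $|\lambda|(\tfrac{r^{n+2}}{n+2}-\tfrac{r^{n+1}}{n+1})$ for the co-analytic contribution. The point is that $|h|$ and $|g|$ cannot be extremised independently: both are built from the same $h'$, and along the ray on which $|h|$ is smallest the factor $h'$ is forced close to $1-t$, which simultaneously shrinks the part of $g$ available to cancel $h$. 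To exploit this I would first use the rotation invariance of the class — the map $w\mapsto e^{-i\theta}f(e^{i\theta}w)$ lies in $\mathcal{F}(\alpha,\lambda e^{i(n+2)\theta},n)$ and has the same modulus at $w=r$ as $f$ has at $z=re^{i\theta}$ — to reduce to the real point $z=r$, and then couple the two integrals $\int_0^r h'(t)\,dt$ and $\lambda\int_0^r t^n h'(t)\,dt$ through a pointwise refinement such as ${\rm Re}\,h'(t)\ge 1-t$ (equivalently $h'\prec 1-z$), so that the deficit $1-|h'(t)|$ controlling the loss in $|h|$ also bounds the cancelling portion of $g$. This would give $|f(z)|\ge r-\tfrac{r^2}{2}-|\lambda|(\tfrac{r^{n+1}}{n+1}-\tfrac{r^{n+2}}{n+2})$, the left-hand side of \eqref{411}. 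Sharpness of both inequalities I would verify with $f_0(z)=z-\tfrac{z^2}{2}+\overline{\lambda(\tfrac{z^{n+1}}{n+1}-\tfrac{z^{n+2}}{n+2})}$, whose analytic part is the extremal $h_0'=1-z$ of Lemma \ref{lem3}, evaluated at $z=\pm r$ with $\lambda$ suitably phased.
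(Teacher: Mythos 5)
Your upper bound is correct and is, up to cosmetics, the paper's own computation: the paper bounds $\abs{f(z)}\leq\int_{\Gamma}(\abs{h'}+\abs{g'})\abs{d\xi}=\int_{\Gamma}(1+\abs{\lambda}\abs{\xi}^n)\abs{h'(\xi)}\abs{d\xi}\leq\int_0^r(1+t)(1+\abs{\lambda}t^n)\,dt$ along the radial segment, which is exactly your $\abs{h}+\abs{g}$ estimate. Your sharpness function $f_0(z)=z-\tfrac{z^2}{2}+\overline{\lambda\bigl(\tfrac{z^{n+1}}{n+1}-\tfrac{z^{n+2}}{n+2}\bigr)}$ with suitably phased $\lambda$ is also right. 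The gap is the lower bound, and you correctly sensed it would be the obstacle: your proposed repair does not close. After the rotation reduction, what you have is $\abs{f(r)}\geq\mathrm{Re}\,f(r)=\int_0^r\mathrm{Re}\bigl[(1+\lambda t^n)h'(t)\bigr]dt$, and the refinement $h'\prec 1-z$ supplies only the pointwise disk constraint $\abs{h'(t)-1}\leq t$. That is not enough: minimizing $\mathrm{Re}\bigl[(1+\lambda t^n)w\bigr]$ over $\abs{w-1}\leq t$ gives $\mathrm{Re}(1+\lambda t^n)-t\abs{1+\lambda t^n}$, and for $\lambda$ purely imaginary this drops below the target integrand $(1-t)(1-\abs{\lambda}t^n)$ once $t(\abs{1+\lambda t^n}-1+\abs{\lambda}t^n)>\mathrm{Re}(\lambda t^n)+\abs{\lambda}t^n$, which happens for $t$ close to $1$. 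Equivalently: $\mathrm{Re}\,h'(t)\geq 1-t$ controls the loss in $h$, but the cancelling term is the \emph{modulus} $\abs{\lambda\int_0^r t^n h'(t)\,dt}$, and the only modulus information available, $\abs{h'(t)}\leq 1+t$, brings back the weak constant $-\abs{\lambda}\bigl(\tfrac{r^{n+1}}{n+1}+\tfrac{r^{n+2}}{n+2}\bigr)$. So the phrase ``this would give'' papers over a step that the stated tools cannot deliver; the route through $\abs{f}\geq\abs{h}-\abs{g}$ is intrinsically lossy because the needed cancellation happens pointwise in the derivatives, not between the two completed integrals.

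The paper's proof avoids the split entirely by applying to $f$ itself the very preimage-of-a-segment device you used for $\abs{h}$. Let $\widetilde{\Gamma}=f^{-1}\bigl([0,f(z)]\bigr)$ (well defined since $f$ is univalent by Theorem B); since $f$ traverses a straight segment along $\widetilde{\Gamma}$,
\begin{equation*}
\abs{f(z)}=\int_{\widetilde{\Gamma}}\abs{df}
\geq\int_{\widetilde{\Gamma}}\bigl(\abs{h'(\xi)}-\abs{g'(\xi)}\bigr)\abs{d\xi}
=\int_{\widetilde{\Gamma}}\bigl(1-\abs{\lambda}\abs{\xi}^n\bigr)\abs{h'(\xi)}\abs{d\xi}
\geq\int_0^r(1-t)\bigl(1-\abs{\lambda}t^n\bigr)\,dt,
\end{equation*}
where the identity in the middle is exactly the coupling you were looking for — the relation \eqref{114} makes $\abs{h'}-\abs{g'}$ \emph{factor} as $(1-\abs{\lambda}\abs{\xi}^n)\abs{h'}$ before any integration — and the last step uses \eqref{4002} together with the fact that $(1-t)(1-\abs{\lambda}t^n)$ is positive (as $\abs{\lambda}\leq\tfrac{1}{n+1}$) and decreasing, so its integral over any arc from $0$ to a point of modulus $r$ dominates the radial integral. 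Expanding this integral gives precisely the left side of \eqref{411}, with the correct sign $+\abs{\lambda}\tfrac{r^{n+2}}{n+2}$; no rotation reduction, subordination refinement, or coupling of completed integrals is needed. Your extremal $f_0$ indeed attains equality, since $f_0([0,r])$ is a straight real segment, making every inequality in the chain an equality.
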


\begin{proof}
Assume that $f=h+\overline{g}\in\mathcal{F}(\alpha,\lambda,n)$. By observing that $h\in\mathcal{G}$,
we know that \eqref{4002} holds.
Also, let $\Gamma$ be the line segment joining $0$ and $z$, then
\begin{equation}\label{413}
\begin{split}
\abs{f(z)}&=\abs{\int_{\Gamma}\frac{\partial f}{\partial \xi}d\xi
+\frac{\partial f}{\partial \overline{\xi}}d\overline{\xi}}\\
&\leq \int_{\Gamma}\left(\abs{h'(\xi)}+\abs{g'(\xi)}\right)\abs{d\xi}\\
&= \int_{\Gamma}\left(1+\abs{\lambda}\abs{\xi}^n\right)\abs{h'(\xi)}\abs{d\xi}\\
&\leq \int_{0}^{r}{(1+\xi)(1+|\lambda|\xi^n)}d\xi\\
&=\frac{1}{2} r \left[2 |\lambda|  \left(\frac{r}{n+2}+\frac{1}{n+1}\right) r^n+r+2\right].
\end{split}
\end{equation}
Moreover, let $\widetilde{\Gamma}$ be the preimage under $f$ of the line segment joining $0$ and $f(z)$, then we obtain
\begin{equation}\label{414}
\begin{split}
\abs{f(z)}&=\int_{\widetilde{\Gamma}}\abs{\frac{\partial f}{\partial \xi}d\xi
+\frac{\partial f}{\partial \overline{\xi}}d\overline{\xi}}\\
&\geq \int_{\widetilde{\Gamma}}\left(\abs{h'(\xi)}-\abs{g'(\xi)}\right)\abs{d\xi}\\
&= \int_{\widetilde{\Gamma}}\left(1-|\lambda||\xi|^n\right)\abs{h'(\xi)}\abs{d\xi}\\
&\geq \int_{0}^{r}{(1-\xi)(1-|\lambda|\xi^n)}d\xi\\
&=\frac{1}{2} r \left[2 |\lambda|  \left(\frac{r}{n+2}-\frac{1}{n+1}\right) r^n-r+2\right].
\end{split}
\end{equation}
It follows from \eqref{413} and \eqref{414} that
 the assertion \eqref{411} of Theorem \ref{t401} holds.
\end{proof}

Denote by
$\mathcal{A}\left(f(\D_{r})\right)$ the area of $f(\D_{r})$,
where $\D_{r}:=r\D$ for $0<r<1$. We now consider the area theorem of mappings $f$ belong to the
class $\mathcal{F}(\alpha,\lambda,n)$.

\begin{theorem}\label{t43}
Let $f\in\mathcal{F}(\alpha,\lambda,n)$.
Then for $0<r<1$, we have
\begin{equation}\label{421}
2\pi\int_{0}^{r}{\left(1-|\lambda|^{2}\xi^{2n}\right)(1-\xi)^2\xi}d\xi
\leq \mathcal{A}\left(f(\D_{r})\right)\leq
2\pi\int_{0}^{r}{\left(1-|\lambda|^{2}\xi^{2n}\right)(1+\xi)^2\xi}d\xi.
\end{equation}
\end{theorem}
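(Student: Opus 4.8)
The plan is to compute $\mathcal{A}\left(f(\D_r)\right)$ directly from the Jacobian of $f$. Since every $f=h+\overline{g}\in\mathcal{F}(\alpha,\lambda,n)$ is univalent (it is close-to-convex by Theorem B), the area of the image equals the integral of the Jacobian $J_f=\abs{h'}^2-\abs{g'}^2$ over $\D_r$, so that
\[
\mathcal{A}\left(f(\D_r)\right)=\iint_{\D_r}\left(\abs{h'(z)}^2-\abs{g'(z)}^2\right)\,dA.
\]
First I would eliminate $g'$ using the defining relation \eqref{114}, namely $g'(z)=\lambda z^nh'(z)$, which gives $\abs{g'(z)}^2=\abs{\lambda}^2\abs{z}^{2n}\abs{h'(z)}^2$ and hence $J_f=\left(1-\abs{\lambda}^2\abs{z}^{2n}\right)\abs{h'(z)}^2$.

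Passing to polar coordinates $z=\rho e^{i\theta}$ with $dA=\rho\,d\rho\,d\theta$, the area takes the form
\[
\mathcal{A}\left(f(\D_r)\right)=\int_0^r\left(1-\abs{\lambda}^2\rho^{2n}\right)\rho\int_0^{2\pi}\abs{h'(\rho e^{i\theta})}^2\,d\theta\,d\rho.
\]
I would then apply the sharp estimates $1-\rho\leq\abs{h'(z)}\leq 1+\rho$ supplied by Lemma \ref{lem3}(2); these are available because the hypothesis \eqref{113} with $1<\alpha\leq 3/2$ guarantees $h\in\mathcal{G}$. The essential point is to verify that the weight $1-\abs{\lambda}^2\rho^{2n}$ is nonnegative on $(0,r)$: since $\abs{\lambda}\leq 1/(n+1)<1$ and $\rho<1$, we have $0<1-\abs{\lambda}^2\rho^{2n}\leq 1$, so multiplying the squared bounds $(1-\rho)^2\leq\abs{h'}^2\leq(1+\rho)^2$ by this weight preserves the direction of both inequalities. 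Integrating out $\theta$ contributes the factor $2\pi$, and the resulting two one-dimensional integrals are precisely the bounds in \eqref{421}.

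The argument is short and essentially computational, so I do not expect a genuine obstacle. The only two places requiring care are the justification that the Jacobian integral really computes the area (univalence, via Theorem B), and the sign check on the weight $1-\abs{\lambda}^2\rho^{2n}$, which is what lets the upper and lower bounds for $\abs{h'}$ be transferred correctly through the integration. No sharpness claim is made in \eqref{421}, so there is no extremal function to track.
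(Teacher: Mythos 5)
Your proposal is correct and follows essentially the same route as the paper: both express $\mathcal{A}\left(f(\D_r)\right)$ as the Jacobian integral, use $g'(z)=\lambda z^nh'(z)$ to reduce it to $\iint_{\D_r}\left(1-|\lambda|^2|z|^{2n}\right)|h'(z)|^2\,dA$, and then apply the bounds $1-r\leq|h'(z)|\leq 1+r$ from Lemma \ref{lem3}(2) in polar coordinates. Your two added checks (univalence via Theorem B to justify the area formula, and positivity of the weight $1-|\lambda|^2\rho^{2n}$) are points the paper leaves implicit, and both are handled correctly.
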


\begin{proof}
Suppose that $f=h+\overline{g}\in\mathcal{F}(\alpha,\lambda,n)$. Then for $0<r<1$, we get
\begin{equation}\label{4012}
\mathcal{A}\left(f(\D_{r})\right)=\iint_{\D_{r}}\left(|h'(z)|^{2}-|g'(z)|^{2}\right)\,dx\,dy
=\iint_{\D_{r}}\left(1-|\lambda|^{2}|z|^{2n}\right)|h'(z)|^{2}\,dx\,dy.
\end{equation}
In view of \eqref{4002} and \eqref{4012}, we obtain the result of Theorem \ref{t43}.
\end{proof}

%In what follows, we will consider the Fekete-Szeg\"{o} problem of
%mappings in the class $\mathcal{F}(\alpha,\lambda,n)$. The following identities will be required in the proof of our next result.
%\begin{equation}\label{61}
%\begin{split}
%&\sum_{k=1}^{\infty}\frac{r^{k}}{k}=-\log(1-r),
%\qquad
%\sum_{k=1}^{\infty}r^{k}=\frac{r}{1-r},
%\qquad
%\sum_{k=1}^{\infty}kr^{k}=\frac{r}{(1-r)^2},\\
%&\sum_{k=1}^{\infty}k^{2}r^{k}=\frac{r(1+r)}{(1-r)^3},
%\qquad \quad
%\sum_{k=1}^{\infty}k^{3}r^{k}=\frac{r(r^2+4r+1)}{(1-r)^4}.
%\end{split}
%\end{equation}

Finally, we shall discuss the radius problems of mappings $f\in\mathcal{F}(\alpha,\lambda)$.
The largest value of $r$ so that the partial sums of $f\in\mathcal{F}(\alpha,\lambda)$
are close-to-convex in $|z|<r$ are considered. For recent results on partial sums of univalent harmonic mappings (see, e.g., Chen \textit{et al.} \cite{crw}, Ghosh and Vasudevarao \cite{gv},  Li and Ponnusamy \cite{lp1,lp2,lp3}, Ponnusamy \textit{et al.} \cite{pss2}, Sun \textit{et al.} \cite{sjr}).

\begin{theorem}\label{t5}
Let $f\in\mathcal{F}(\alpha,\lambda)$ be of the form \eqref{111}. Then for each $m\geq 1,\ l\geq 2$,
$$\mathcal{S}_{m,l}(f)(z)=\sum_{k=1}^{m}a_{k}z^{k}+\overline{\sum_{k=2}^{l} b_{k}z^{k}}\quad(a_1=1)$$ is close-to-convex in $|z|<r_{c}\approx 0.503$, where $r_{c}$
is the least positive real root in the interval $(0, 1)$ of the equation:
\begin{equation}\label{50}
2+2\ln(1-r)+r\ln(1-r)-r+r^2=0.
\end{equation}
The bound  $r_{c}$ is sharp.
\end{theorem}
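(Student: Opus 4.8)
The plan is to prove that $\mathcal S_{m,l}(f)=P_m+\overline{Q_l}$, with $P_m(z)=\sum_{k=1}^{m}a_kz^k$ and $Q_l(z)=\sum_{k=2}^{l}b_kz^k$, is close-to-convex by reducing everything to a single pointwise inequality. I would invoke the classical criterion of Clunie and Sheil-Small \cite{cs}: a sense-preserving harmonic mapping $H+\overline G$ is close-to-convex provided every analytic function $H+\epsilon G$ ($\abs\epsilon=1$) is close-to-convex. Combined with the Noshiro--Warschawski theorem this shows that it suffices to establish
\[
{\rm Re}\,P_m'(z)>\abs{Q_l'(z)}\qquad(\abs z<r),
\]
for then ${\rm Re}\,(P_m+\epsilon Q_l)'(z)\ge {\rm Re}\,P_m'(z)-\abs{Q_l'(z)}>0$ for all $\abs\epsilon=1$ (so each $P_m+\epsilon Q_l$ is close-to-convex), and $\mathcal S_{m,l}(f)$ is simultaneously sense-preserving because $\abs{Q_l'}<{\rm Re}\,P_m'\le\abs{P_m'}$. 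The whole theorem thus reduces to verifying the displayed inequality in $\abs z<r_c$.

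The next step is to exploit the rigid link between the two parts. Since $f\in\mathcal F(\alpha,\lambda)$ satisfies $g'(z)=\lambda zh'(z)$, comparing coefficients exactly as in \eqref{6} (with $n=1$) gives $kb_k=\lambda(k-1)a_{k-1}$, whence $Q_l'(z)=\lambda z\,P_{l-1}'(z)$. The inequality to be proved therefore becomes
\[
{\rm Re}\,P_m'(z)>\abs\lambda\,\abs z\,\abs{P_{l-1}'(z)}\qquad(\abs z=r),
\]
so the problem is recast as a competition between two partial sums of $h'$ for the same starlike $h\in\mathcal G$. I would then estimate the two sides by the coefficient bounds of Theorem \ref{t1} and Lemma \ref{lem1} (for $\alpha=3/2$ these read $\abs{a_k}\le\frac1{k(k-1)}$, hence $k\abs{a_k}\le\frac1{k-1}$) together with the sharp growth and tail estimates of Lemma \ref{lem3}. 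Using ${\rm Re}\,P_m'(z)\ge 1-\sum_{k\ge2}\frac{r^{k-1}}{k-1}=1+\ln(1-r)$ and the analogous tail sum for $\abs z\,\abs{P_{l-1}'}$ scaled by $\abs\lambda\le\frac12$, the geometric-type series $\sum r^{k}/(k-1)$ produce precisely the logarithmic terms $\ln(1-r)$ and $r\ln(1-r)$, while a careful (non-independent) treatment of the lowest-order coefficient $a_2$, which is common to both sides, supplies the remaining algebraic terms $-r$ and $+r^2$. Collecting everything and clearing the factor $\tfrac12$ turns the requirement into $2+2\ln(1-r)+r\ln(1-r)-r+r^2>0$, i.e. the left-hand side of \eqref{50} must be positive.

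To finish I would analyze the function $\Phi(r)=2+2\ln(1-r)+r\ln(1-r)-r+r^2$: it equals $2$ at $r=0$ and tends to $-\infty$ as $r\to1^-$ (since the coefficient $2+r$ of $\ln(1-r)$ is positive), and it changes sign exactly once, so it has a unique zero $r_c\in(0,1)$ with $r_c\approx0.503$; the inequality — and hence close-to-convexity of every $\mathcal S_{m,l}(f)$ — then holds precisely for $\abs z<r_c$.

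The hard part is twofold. First, the estimate must be done sharply rather than crudely: bounding ${\rm Re}\,P_m'$ and $\abs{Q_l'}$ by fully independent worst cases yields only $2+2\ln(1-r)+r\ln(1-r)-r>0$, whose root is strictly smaller than $r_c$, so one genuinely has to keep the shared coefficient $a_2$ accountable (and identify the worst truncation indices $m,l$) to recover the extra $+r^2$ and the correct constant. Second, the sharpness claim requires producing a mapping in $\mathcal F(\alpha,\lambda)$ and a partial sum for which ${\rm Re}\,P_m'-\abs{Q_l'}$ vanishes at a point of $\abs z=r_c$, so that close-to-convexity fails beyond $r_c$; this extremal is governed by the coefficient-extremal functions of Lemma \ref{lem1} (built from $h$ with $\abs{a_k}=\frac1{k(k-1)}$). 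I expect the most delicate point to be verifying that the worst configuration is actually realized by a genuine member of $\mathcal G$, rather than by the formal coefficient-extremal series, which need not itself belong to $\mathcal G$.
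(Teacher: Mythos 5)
Your overall reduction coincides with the paper's: via the Clunie--Sheil-Small criterion and the Noshiro--Warschawski theorem it suffices to show ${\rm Re}\bigl(P_m'(z)+\epsilon Q_l'(z)\bigr)>0$ for every $|\epsilon|=1$, and the identity $Q_l'(z)=\lambda z P_{l-1}'(z)$ coming from \eqref{6} with $n=1$ is also the paper's first move. The genuine gap is in the central estimate, which you never carry out. You correctly diagnose that the fully independent coefficient bounds ${\rm Re}\,P_m'(z)\ge 1+\ln(1-r)$ and $|\lambda|\,r\,|P_{l-1}'(z)|\le \tfrac r2\left(1-\ln(1-r)\right)$ only yield $2+2\ln(1-r)+r\ln(1-r)-r>0$, whose root undershoots $r_c$; but your proposed repair --- treating the coefficient $a_2$ jointly on the two sides --- is not the mechanism that produces \eqref{50}, and you give no computation showing it would. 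The paper recovers the missing $+r^2$ with \emph{still independent} worst-case bounds, via a different decomposition: it writes $\mathcal S_m'(h)=h'-\Sigma_m'(h)$ and applies the sharp distortion bounds for $h\in\mathcal G$ from Lemma \ref{lem3}, namely ${\rm Re}\,h'(z)\ge 1-r$ and $|zh'(z)|\le r(1+r)$, to the main term, together with the tail bound $|\Sigma_n'(z)|\le\Delta(n):=-\ln(1-r)-\sum_{k=1}^{n-1}r^k/k$, which is decreasing in $n$, so that for $m,l\ge 3$ one may use $\Delta(m)\le\Delta(3)=-\ln(1-r)-r-r^2/2$ and $\Delta(l-1)\le\Delta(2)=-\ln(1-r)-r$. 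Summing gives exactly $1+\ln(1-r)-\tfrac r2+\tfrac r2\ln(1-r)+\tfrac{r^2}2>0$, i.e. \eqref{50}. The gain over your naive bound comes from $1-r\ge 1+\ln(1-r)$ and from the truncated tails, not from any correlation of $a_2$; whether your correlation idea can be pushed to yield precisely the terms $-r+r^2$ is left entirely unverified in your sketch.

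Two further omissions. First, the tail machinery above only covers $m,l\ge 3$: the paper must, and does, treat the boundary cases separately ($m\in\{1,2\}$ with $l=2$; $m\in\{1,2\}$ with $l\ge3$; $m\ge3$ with $l=2$, where one uses only $|b_2|=|\lambda|/2$ and $|a_2|\le\alpha-1$), obtaining radii $r_1=2/3$, $r_3\approx 0.6536$, $r_4\approx 0.5846$, and then $r_c=\min\{r_1,r_2,r_3,r_4\}=r_2$. Your remark about "identifying the worst truncation indices" points at this, but as written your argument simply does not apply when $l=2$ or $m\le 2$, since those partial sums are not captured by the $\Delta(n)$ bounds. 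Second, regarding sharpness: you are right that an extremal configuration attaining equality at $|z|=r_c$ would be needed, and you correctly flag the delicacy that the formal coefficient-extremal series need not lie in $\mathcal G$; note, however, that the paper's own proof contains no sharpness verification either, so on this last point your proposal is incomplete in the same way the paper is.
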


\begin{proof}
Let $f=h+\overline{g}\in\mathcal{F}(\alpha,\lambda)$ and $\phi=h+\varepsilon\overline{g}$ with $|\varepsilon|=1$.
We observe that
${\rm Re}\left(\varphi'(z)\right)>0$ for $\varphi\in\mathcal{A}$ implies that $\varphi$ is a close-to-convex analytic function.
Therefore, it is sufficient to show that each partial sums
\begin{equation*}
\mathcal{S}_{m,l}(\phi)(z)=\sum_{k=1}^{m}a_{k}z^{k}+\varepsilon\overline{\sum_{k=2}^{l} b_{k}z^{k}}
\end{equation*}
satisfies the condition
\begin{equation*}
{\rm Re}\left(\Gamma'_{m,l}(\phi)(z)\right)>0
\end{equation*}
in the disk $|z|<r_{c}$ for all $|\varepsilon|=1$ and $m\geq 1,\ l\geq 2$, where
\begin{equation*}
\Gamma_{m,l}(\phi)(z)=\sum_{k=1}^{m}a_{k}z^{k}+\varepsilon\sum_{k=2}^{l} b_{k}z^{k}.
\end{equation*}

In order to prove the radii of close-to-convexity for the partial sums $\mathcal{S}_{m,l}(f)(z)$, we split it into four cases to prove.
\begin{enumerate}
\item

For $m=1,2$, $l=2$, we have

$$
\Gamma_{1,2}(\phi)(z)=z+\varepsilon b_{2}z^{2},$$
 and
$$\Gamma_{2,2}(\phi)(z)=z+a_{2}z^{2}+\varepsilon b_{2}z^{2},
$$
it follows that
$$
\Gamma'_{1,2}(\phi)(z)=1+\varepsilon\lambda z,
$$ and $$
\Gamma'_{2,2}(\phi)(z)=1+2a_{2}z+\varepsilon\lambda z.
$$
Clearly, ${\rm Re}\big(\Gamma'_{1,2}(\phi)(z)\big)>0$ in $|z|<r_{1}={2}/{3}$.
By Lemma \ref{lem1}, we know that $|a_{2}|\leq \alpha-1$, thus,
\begin{equation*}\begin{split}
{\rm Re}\left(\Gamma'_{2,2}(\phi)(z)\right)&\geq 1-2|a_{2}||z|-|\lambda||z|
\\&\geq 1-[2(\alpha-1)+|\lambda|]|z|\\&\geq 1-\frac{3}{2}|z|>0  \quad (|z|<r_{1}).\end{split}
\end{equation*}

\item

For $m,l\geq 3$, we find from \eqref{113} and \eqref{114} that
\begin{equation}\label{51}
\begin{split}
&{\rm Re}\left(\Gamma'_{m,l}(\phi)(z)\right)
\\&\quad={\rm Re}\left(\mathcal{S}'_{m}(h)(z)+\varepsilon\lambda z\mathcal{S}'_{l-1}(h)(z)\right)\\
&\quad={\rm Re}\left(\left(h'(z)-\Sigma'_{m}(h)(z)\right)+\varepsilon\lambda z\left(h'(z)-\Sigma'_{l-1}(h)(z)\right)\right)\\
&\quad\geq {\rm Re}\left(h'(z)\right)-|\Sigma'_{m}(h)(z)|-|\lambda||z||h'(z)|-|\lambda||z||\Sigma'_{l-1}(h)(z)|\\
&\quad\geq {\rm Re}\left(h'(z)\right)-|\Sigma'_{m}(h)(z)|-\frac{1}{2}|z||h'(z)|-\frac{1}{2}|z||\Sigma'_{l-1}(h)(z)|.
\end{split}
\end{equation}
In view of \eqref{4002},
we obtain
\begin{equation}\label{52}
\min_{|z|=r<1}\{{\rm Re}\left(h'(z)\right)\}
\geq \min_{|z|=r<1}\{{\rm Re}\left(1-z\right)\}
\geq 1-r.
\end{equation}
From Lemma \ref{lem3}(3), for $|z|=r<1$, we know that
\begin{equation*}
\abs{\Sigma'_n(z)}
\leq\sum_{k=0}^{\infty}\frac{r^{k+n}}{k+n}=-\ln(1-r)-\sum_{k=1}^{n-1}\frac{r^k}{k}=:\Delta(n),
\end{equation*}
and
\begin{equation*}
\Delta(n+1)-\Delta(n)=-\frac{r^{n}}{n}<0\quad(n\geq 2).
\end{equation*}
Therefore, $\Delta(n)$ is a decreasing function of $n$. For all $m,l\geq 3$, we see that
\begin{equation}\label{53}
\Delta(m)\leq \Delta(3)=-\ln(1-r)-r-\frac{r^2}{2},
\end{equation} and \begin{equation}\label{54}
\Delta(l-1)\leq \Delta(2)=-\ln(1-r)-r.
\end{equation}
Moreover, it follows from Lemma \ref{lem3}(2) that
\begin{equation}\label{55}
|z||h'(z)|\leq |z|(1+|z|)=r(1+r)\quad(|z|=r<1).
\end{equation}
From the relationships \eqref{51}, \eqref{52}, \eqref{53}, \eqref{54} and \eqref{55}, it follows that
\begin{equation*}
\begin{split}
{\rm Re}\left(\Gamma'_{m,l}(\phi)(z)\right)
\geq 1+\ln(1-r)-\frac{r}{2}+\frac{1}{2}r\ln(1-r)+\frac{1}{2}r^2>0
\end{split}
\end{equation*}
for all $m,l\geq 3$ and $|z|=r<r_{2}\approx 0.503$, where $r_{2}$
is the least positive root in the interval $(0, 1)$ of the equation:
\begin{equation*}
2+2\ln(1-r)+r\ln(1-r)-r+r^2=0.
\end{equation*}

\item
For $m=1,2$, $l\geq 3$, we see that
\begin{equation*}
\begin{split}
{\rm Re}\left(\Gamma'_{2,l}(\phi)(z)\right)
&={\rm Re}\left(\mathcal{S}'_{2}(h)(z)+\varepsilon \mathcal{S}'_{l}(g)(z)\right)\\
&={\rm Re}\left(1+2a_{2}z+\varepsilon\lambda z \mathcal{S}'_{l-1}(h)(z)\right)\\
&\geq 1-2|a_{2}||z|-|\lambda||z||h'(z)|-|\lambda||z||\Sigma'_{l-1}(h)(z)|\\
&\geq 1-\frac{1}{2}|z|-\frac{1}{2}|z||h'(z)|-\frac{1}{2}|z||\Sigma'_{l-1}(h)(z)|.
\end{split}
\end{equation*}
From \eqref{53} and \eqref{54}, we know that
\begin{equation*}
{\rm Re}\left(\Gamma'_{2,l}(\phi)(z)\right)
\geq 1-\frac{1}{2}r-\frac{r(1+r)}{2}+\frac{1}{2}r[\ln(1-r)+r]>0
\end{equation*}
for all $l\geq 3$ and $|z|=r<r_{3}\approx 0.653575$, where $r_{3}$
is the least positive root in the interval $(0, 1)$ of the equation:
\begin{equation*}
2-2r+r\ln(1-r)=0.
\end{equation*}
Similarly, for all $l\geq 3$ and $|z|=r<r_{3}$, we have
\begin{equation*}
\begin{split}
{\rm Re}\left(\Gamma'_{1,l}(\phi)(z)\right)
\geq 1-\frac{1}{2}|z||h'(z)|-\frac{1}{2}|z||\Sigma'_{l-1}(h)(z)|\geq
1-\frac{r}{2}+\frac{r}{2}\ln(1-r)
>0.
\end{split}
\end{equation*}

\item
For $m\geq 3, l=2$, we deduce from \eqref{52} and \eqref{53} that
\begin{equation*}
\begin{split}
{\rm Re}\left(\Gamma'_{m,2}(\phi)(z)\right)
&={\rm Re}\left(\mathcal{S}'_{m}(h)(z)+\varepsilon \mathcal{S}'_{2}(g)(z)\right)\\
&\geq {\rm Re}\left(h'(z)\right)-|\Sigma'_{m}(h)(z)|-|\lambda||z|\\
&\geq {\rm Re}\left(h'(z)\right)-|\Sigma'_{m}(h)(z)|-\frac{1}{2}|z|\\
&\geq
%1-r+\ln(1-r)+r+\frac{r^2}{2}-\frac{1}{2}r\\&=
1-\frac{1}{2}r+\ln(1-r)+\frac{r^2}{2}
%\\&
>0,
\end{split}
\end{equation*}
where $|z|=r<r_{4}\approx 0.584628$, where $r_{4}$ is the least positive root in the interval $(0, 1)$ of the equation:
\begin{equation*}
2-r+2\ln(1-r)+{r^2}=0.
\end{equation*}
\end{enumerate}

By setting
\begin{equation*}
r_{c}:=\min\{r_{1},r_{2},r_{3},r_{4}\}=r_{2},
\end{equation*}
we see that ${\rm Re}\left(\Gamma'_{m,l}(\phi)(z)\right)>0$
for all $|z|<r_{c}$ and $m\geq 1,\ l\geq 2$.
The proof of Theorem \ref{t5} is thus completed.
\end{proof}

\vskip .20in
\begin{center}{\sc Acknowledgments}
\end{center}

\vskip .05in
The present investigation was supported by the \textit{Key Project of Education Department of Hunan Province} under Grant no.
19A097, and
the \textit{National
Natural Science Foundation} under Grant no. 11961013 of the P. R. China.

\vskip .20in

\end{document}